\pdfoutput=1 

\documentclass[11pt]{amsart}

\usepackage{amssymb,amsthm,enumitem,colonequals,tikz-cd,microtype}
\usepackage[normalem]{ulem} 

\usepackage[osf]{Baskervaldx}
\usepackage[baskervaldx]{newtxmath}
\usepackage[cal=boondoxo]{mathalfa} 

\usepackage[top=3.75cm, bottom=3cm, left=3.5cm, right=3.5cm]{geometry}

\usepackage{xcolor}
\colorlet{darkblue}{blue!55!black}
\colorlet{darkcyan}{cyan!50!black}
\colorlet{darkgreen}{green!60!black}

\PassOptionsToPackage{hyphens}{url}
\usepackage{hyperref}
\hypersetup{
    colorlinks=true,
    linkcolor=darkblue,
    urlcolor=darkcyan,
    citecolor=darkgreen,
}

\def\eqref#1{\textcolor{darkblue}{(\ref{#1})}}

\usepackage[nameinlink]{cleveref} 
\Crefformat{section}{#2\S#1#3}
\Crefmultiformat{section}{#2\S\S#1#3}{ and~#2#1#3}{, #2#1#3}{, and~#2#1#3}

\usepackage[pagewise]{lineno}
\let\oldequation\equation
\let\oldendequation\endequation
\renewenvironment{equation}{\linenomathNonumbers\oldequation}{\oldendequation\endlinenomath}
\expandafter\let\expandafter\oldequationstar\csname equation*\endcsname
\expandafter\let\expandafter\oldendequationstar\csname endequation*\endcsname
\renewenvironment{equation*}{\linenomathNonumbers\oldequationstar}{\oldendequationstar\endlinenomath}
\let\oldalign\align
\let\oldendalign\endalign
\renewenvironment{align}{\linenomathNonumbers\oldalign}{\oldendalign\endlinenomath}
\expandafter\let\expandafter\oldalignstar\csname align*\endcsname
\expandafter\let\expandafter\oldendalignstar\csname endalign*\endcsname
\renewenvironment{align*}{\linenomathNonumbers\oldalignstar}{\oldendalignstar\endlinenomath}

\theoremstyle{plain}
\newtheorem{theorem}{Theorem}[section]
\newtheorem{lemma}[theorem]{Lemma}
\newtheorem{corollary}[theorem]{Corollary}
\newtheorem{proposition}[theorem]{Proposition}

\theoremstyle{definition}
\newtheorem{definition}[theorem]{Definition}

\newtheorem{remark}[theorem]{Remark}
\newtheorem{setup}[theorem]{Setup}
\newtheorem{notation}[theorem]{Notation}

\newtheorem*{ack}{Acknowledgments}

\AddToHook{env/notation/begin}{\crefalias{theorem}{notation}}
\AddToHook{env/lemma/begin}{\crefalias{theorem}{lemma}}
\AddToHook{env/corollary/begin}{\crefalias{theorem}{corollary}}
\AddToHook{env/proposition/begin}{\crefalias{theorem}{proposition}}
\AddToHook{env/definition/begin}{\crefalias{theorem}{definition}}
\AddToHook{env/remark/begin}{\crefalias{theorem}{remark}}
\AddToHook{env/setup/begin}{\crefalias{theorem}{setup}}
\AddToHook{env/example/begin}{\crefalias{theorem}{example}}

\numberwithin{equation}{section}
\numberwithin{theorem}{section}

\title[Gluing weak approximability along recollements]{Gluing weak approximability along recollements}

\author[P.~Lank]{Pat Lank}
\address{P.~Lank,
Dipartimento di Matematica “F. Enriques”, Universit\`{a} degli Studi di Milano, Via Cesare
Saldini 50, 20133 Milano, Italy}
\email{plankmathematics@gmail.com}

\date{\today}

\keywords{Weakly approximable triangulated categories, recollements, $t$-structures, algebraic spaces}

\subjclass[2020]{18G80 (primary), 14F08, 14A20} 

\begin{document}

\begin{abstract}
    We study whether weak approximability glues along recollements of triangulated categories that are compactly generated by a single object.
    This is applied to derived categories of complexes with quasi-coherent cohomology and prescribed support for quasi-compact quasi-separated algebraic spaces.
\end{abstract}

\maketitle

\tableofcontents

\section{Introduction}
\label{sec:intro}

Approximable triangulated categories, and their weak variant, were introduced by Neeman \cite{Neeman:2021c,Neeman:2021,Neeman:2026}.
The theory has been used to settle conjectures of Bondal--Van den Bergh \cite{Neeman:2021} and of Antieau--Gepner--Heller \cite{Neeman:2024}.
It has also found applications to Fourier--Mukai theory for singular schemes  \cite{Dutta/Lank/ManaliRahul:2025,GuisadoVillaalgordo/Lank/ManaliRahul/Pavic:2025,GuisadoVillaalgordo/Lank:2026}.
See \cite{Canonaco/Neeman/Stellari:2024} for a survey of recent developments.

Roughly speaking, a triangulated category with small coproducts and a compact generator $G$ is weakly approximable if every object in the aisle of a suitable $t$-structure can be suitably approximated from small coproducts of a uniformly bounded range of shifts of $G$.
For example, if $X$ is a quasi-compact quasi-separated scheme and $Z\subseteq X$ is a closed subset with quasi-compact complement, $D_{\operatorname{qc},Z}(X)$ is weakly approximable \cite[Theorem 3.2(iv)]{Neeman:2024}.

The main result is the following. 

\begin{theorem}
    \label{thm:glueing_weak_approx}
    Consider a recollement of singly compactly generated triangulated categories:
    \begin{displaymath}
        \begin{tikzcd}[column sep=6em]
            {\mathcal{D}_Z} & {\mathcal{D}} & {\mathcal{D}_U.}
            \arrow["{i_\ast}" description, from=1-1, to=1-2]
            \arrow["{i^\ast}"', bend right=35, from=1-2, to=1-1]
            \arrow["{i^!}", bend left=35, from=1-2, to=1-1]
            \arrow["{j^\ast}" description, from=1-2, to=1-3]
            \arrow["{j_!}"', bend right=35, from=1-3, to=1-2]
            \arrow["{j_\ast}", bend left=35, from=1-3, to=1-2]
        \end{tikzcd}
    \end{displaymath}
    Assume that $\mathcal{D}_Z$ and $\mathcal{D}_U$ are weakly approximable. Suppose $G^\prime$ is a compact generator of $\mathcal{D}$.
    Then $\mathcal{D}$ is weakly approximable if, and only if, $\operatorname{Hom}(G^\prime,G^\prime[n])=0$ for $n\gg 0$.
\end{theorem}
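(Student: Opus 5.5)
Recall Neeman's definition: $\mathcal{D}$ with compact generator $G$ is weakly approximable if there are a $t$-structure $(\mathcal{D}^{\le 0},\mathcal{D}^{\ge 0})$ and an integer $A>0$ satisfying three conditions. First, $G[A]\in\mathcal{D}^{\le 0}$. Second, $\operatorname{Hom}(G[-A],\mathcal{D}^{\le 0})=0$. Third, every $F\in\mathcal{D}^{\le 0}$ admits a triangle $E\to F\to D$ with $E\in\overline{\langle G\rangle}^{[-A,A]}$ and $D\in\mathcal{D}^{\le -1}$. The plan is to prove each direction of the equivalence separately and to use the recollement to glue the data on $\mathcal{D}_Z$ and $\mathcal{D}_U$.

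\emph{Necessity.} Suppose $\mathcal{D}$ is weakly approximable. Because $G'$ is compact, it lies in $\mathcal{D}^{\le N}$ for some $N$; this uses that the compact generator $G$ lies in $\mathcal{D}^{\le -A}$ and that any other compact generator lies in finitely many cones and shifts of $G$. The second condition gives $\operatorname{Hom}(G,\mathcal{D}^{\le -A-1})=0$. By dévissage, $\operatorname{Hom}(G',\mathcal{D}^{\le -M})=0$ for $M\gg 0$. Combining the two, $\operatorname{Hom}(G',G'[n])=0$ for $n\gg0$. This is the standard fact that in the preferred $t$-structure, compacts are bounded above and have bounded-below "Hom-amplitude".

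\emph{Sufficiency.} Choose compact generators $G_Z$ of $\mathcal{D}_Z$ and $G_U$ of $\mathcal{D}_U$, with $t$-structures and constants $A_Z, A_U$ witnessing weak approximability. The functor $i_*$ preserves compacts, because its right adjoint $i^!$ preserves coproducts; in a recollement of compactly generated categories this holds, since $i_*$ has right adjoint $i^!$ and $j_*$ also preserves coproducts. The functor $j_!$ preserves compacts, because its right adjoint $j^*$ preserves coproducts. Hence $G:=i_*G_Z\oplus j_!G_U$ is a compact generator of $\mathcal{D}$. Glue the $t$-structures BBD-style:
\[
\mathcal{D}^{\le 0}=\{F : j^*F\in\mathcal{D}_U^{\le 0},\ i^*F\in\mathcal{D}_Z^{\le 0}\}.
\]
This is the aisle compactly generated by $i_*(G_Z[\ge A_Z])$ and $j_!(G_U[\ge A_U])$, up to the shift normalisation. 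To arrange this, I would first replace the $t$-structures by the ones generated by $G_Z$ and $G_U$ in degrees $\ge0$, which are in the preferred equivalence class. With this choice, condition (1) for $G$ is immediate. Condition (3) follows by gluing approximations. Given $F\in\mathcal{D}^{\le0}$, approximate $j^*F$ in $\mathcal{D}_U$ and push forward by $j_!$. Then approximate the cone's $i^*$ in $\mathcal{D}_Z$ and apply $i_*$. An octahedral-axiom argument then produces $E\in\overline{\langle G\rangle}^{[-2A,2A]}$ with cone in $\mathcal{D}^{\le -1}$. The widening of the range is harmless, and $i_*$ and $j_!$ preserve coproducts and the relevant aisles.

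The \emph{main obstacle} is condition (2): $\operatorname{Hom}(G[-B],\mathcal{D}^{\le0})=0$ for some $B$. Here the hypothesis enters. The aisle $\mathcal{D}^{\le 0}$ is generated under coproducts and extensions by $G[\ge0]$, and $G$ is compact. It therefore suffices to show $\operatorname{Hom}(G,G[n])=0$ for $n\gg0$, i.e.\ that $G$ is bounded above for itself. Since $G$ and $G'$ are compact generators, each is built from the other in finitely many steps. Hence $\operatorname{Hom}(G',G'[n])=0$ for $n\gg0$ is equivalent to $\operatorname{Hom}(G,G[n])=0$ for $n\gg0$, and this passage between the two generators is the precise point where the hypothesis is used. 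The vanishing then extends from $G[m]$ to the whole aisle by passing Hom out of a compact through coproducts and a countable homotopy colimit. This uses a Milnor triangle, and I expect the $\lim^1$ issues to be harmless because only vanishing is needed. Note that the component vanishings $\operatorname{Hom}(G_Z,G_Z[n])=0$ and $\operatorname{Hom}(G_U,G_U[n])=0$ for $n\gg0$ already hold, by weak approximability of $\mathcal{D}_Z$ and $\mathcal{D}_U$. The cross terms $\operatorname{Hom}(j_!G_U,i_*G_Z[n])=0$ vanish as well, since $\operatorname{Hom}(j_!G_U,i_*G_Z[n])=\operatorname{Hom}(G_U,j^*i_*G_Z[n])$ and $j^*i_*=0$. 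The genuinely uncontrolled term is $\operatorname{Hom}(i_*G_Z,j_!G_U[n])$. This is exactly what the hypothesis on $G'$ controls, and it explains why the condition is needed at all.
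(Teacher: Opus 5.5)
The necessity direction is fine. It is essentially the paper's argument, with your d\'evissage from the defining generator replacing the citation of Burke--Neeman--Pauwels. One slip: condition (1) gives $G\in\mathcal{D}^{\le A}$, not $\mathcal{D}^{\le -A}$. Sufficiency, however, has two genuine gaps.

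\textbf{First gap: your generator need not be compact.} In a recollement, $i_\ast$ need not preserve compact objects. The functors guaranteed to preserve compacts are $i^\ast$ and $j_!$, because their right adjoints $i_\ast$ and $j^\ast$ preserve coproducts. By contrast, $i_\ast$ preserves compacts iff $i^!$ preserves coproducts, iff $j_\ast$ does, iff $j^\ast$ preserves compacts. Your claim that $j_\ast$ preserves coproducts is unjustified, and it fails in the very recollement the paper feeds into the theorem (\Cref{prop:A8}). Take $X=\mathbb{A}^1_k\supset U=\mathbb{G}_m$ and $S=|X|$. Then $i_\ast=\mathbf{R}j_\ast$ is restriction of scalars $D(k[x,x^{-1}])\to D(k[x])$, which sends $k[x,x^{-1}]$ to a non-perfect complex. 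So $i_\ast G_Z\oplus j_!G_U$ need not be compact, and the glued aisle is not compactly generated by it. Your reduction of condition (2) to $\operatorname{Hom}(G,G[n])=0$, and the comparison with $G^\prime$, therefore have nothing to stand on.

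The paper avoids this in \Cref{lem:normalize}. It takes $G=G^\prime[m]\oplus j_!G_U$, with $m$ chosen (using the hypothesis and weak approximability of $\mathcal{D}_U$) so that $j^\ast G^\prime\in\mathcal{D}_U^{\le m}$. It then uses $i^\ast G$ as the compact generator of $\mathcal{D}_Z$. With this $G$, condition (2) is the short closure argument of \Cref{lem:hom_vanishing}, with no Milnor triangles. It is not the main obstacle.

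\textbf{Second gap: condition (3) is not a formal octahedral argument.} This is where all the work lies. In your order of steps, an approximation $E_Z\to i^\ast C$ gives, after $i_\ast$, a map $i_\ast E_Z\to i_\ast i^\ast C$. But the unit goes $C\to i_\ast i^\ast C$, so you get no map into $C$.
\begin{itemize}
\item A map $i_\ast E_Z\to C$ is the same as a map $E_Z\to i^!C$. But $i^!C$ need not lie in $\mathcal{D}_Z^{\le 0}$, since $i^!$ is only left $t$-exact.
\item Lifting along $C\to i_\ast i^\ast C$ instead meets an obstruction in $\operatorname{Hom}(-,j_!j^\ast C[1])$. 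This need not vanish, because $j^\ast C$ is only known to lie in $\mathcal{D}_U^{\le -1}$.
\end{itemize}

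The missing idea is the following. First approximate $i^\ast X$ by some $E^\prime\in\overline{\langle i^\ast G\rangle}^{[-A,A]}$. Then lift $E^\prime\to i^\ast X$ to a map $E_1\to X$ with $i^\ast E_1\cong E^\prime$ and $E_1\in\overline{\langle G\rangle}^{[-3A,3A]}\cap\mathcal{D}^{\le K}$, where $K=A+1$.
\begin{itemize}
\item For a single shift $(i^\ast G)[n]$, this is \Cref{lem:onecell}. There $j^\ast Y$ is approximated very deeply (\Cref{lem:iterate} with $m=3A+1$), so the obstruction lands in $\operatorname{Hom}(G,\mathcal{D}^{\le -A})=0$. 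This vanishing, coming from the hypothesis on $G^\prime$, is where the hypothesis does its real work.
\item \Cref{prop:closure} then shows that the class of liftable objects is closed under coproducts and extensions, with the same bound $K$ on the target and the lift. This is essential: weak approximability gives no bound on the number of cones needed to build $E^\prime$, so a lift whose bound worsens at each cone would not suffice.
\end{itemize}
Only after this lift is the $\mathcal{D}_U$ part handled, by the octahedra you have in mind.
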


The proof of \cite[Theorem 1.2]{DeDeyn/Lank/ManaliRahul/Peng:2025} does not rely on weak approximability of $D_{\operatorname{qc},Z}$ for algebraic stacks. 
As an application of \Cref{thm:glueing_weak_approx}, we establish weak approximability for algebraic spaces (see \Cref{rmk:stacks_version} for algebraic stacks). 

\begin{corollary}
    \label{cor:weak_approx_spaces}
    Let $X$ be a quasi-compact quasi-separated algebraic space.
    Suppose $S\subseteq |X|$ is a closed subset with quasi-compact complement.
    Then $D_{\operatorname{qc},S}(X)$ is weakly approximable. 
    In particular, $D_{\operatorname{qc}}(X)$ is weakly approximable.
    If $X$ is Noetherian, then $D_{\operatorname{qc},S}(X)$ is coherent.
\end{corollary}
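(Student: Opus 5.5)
The plan is to reduce \Cref{cor:weak_approx_spaces} to \Cref{thm:glueing_weak_approx} by a Mayer--Vietoris style induction on the size of an \'etale presentation of $X$. We use the standard induction principle for quasi-compact quasi-separated algebraic spaces. Such an $X$ admits a finite sequence of quasi-compact opens $\emptyset=U_0\subseteq U_1\subseteq\cdots\subseteq U_n=X$. Each step $U_{k-1}\subseteq U_k$ sits in an elementary distinguished (\'etale Nisnevich) square with an affine scheme $V_k\to U_k$ that is an isomorphism over the closed complement $T_k=U_k\setminus U_{k-1}$.

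For each $k$, and for $S$ replaced by $S\cap|U_k|$, there is a recollement
\[
D_{\operatorname{qc},S\cap T_k}(U_k)\;\to\; D_{\operatorname{qc},S\cap U_k}(U_k)\;\to\; D_{\operatorname{qc},S\cap U_{k-1}}(U_{k-1}).
\]
Here $i_\ast$ is the inclusion, $j^\ast$ is restriction, and $j_\ast$ is pushforward. The right adjoint $i^!$ is the local cohomology functor $\mathbf{R}\Gamma_{T_k}$, and the existence of $j_!$ follows because the inclusion preserves compacts. All three categories are compactly generated by a single compact object, by the known results on compact generation of $D_{\operatorname{qc},Z}$ for qcqs algebraic spaces (Rydh, Hall--Rydh). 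For $D_{\operatorname{qc},S\cap U_k}(U_k)$ to be the middle term we need $S\cap T_k$ to have quasi-compact complement in $U_k$. This holds since $T_k$ has quasi-compact complement $U_{k-1}$ and $S$ has quasi-compact complement.

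Next we handle the left term. By excision for the square, pullback along $V_k\to U_k$ gives an equivalence $D_{\operatorname{qc},S\cap T_k}(U_k)\simeq D_{\operatorname{qc},S'\cap T'_k}(V_k)$, where $T'_k$ and $S'$ are the preimages. The right-hand side is a category on an affine scheme with closed support whose complement is quasi-compact, so it is weakly approximable by \cite[Theorem 3.2(iv)]{Neeman:2024}. Weak approximability transfers across equivalences. The right term is weakly approximable by induction on $k$, and the base case $U_0=\emptyset$ is trivial.

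It then remains to verify the hypothesis of \Cref{thm:glueing_weak_approx}: a compact generator $G'$ of $D_{\operatorname{qc},S\cap U_k}(U_k)$ has $\operatorname{Hom}(G',G'[n])=0$ for $n\gg0$. \textbf{This is the main obstacle.} For this we use that $G'$ is a perfect complex. For a perfect complex $P$ on a qcqs algebraic space, $\operatorname{Hom}(P,P[n])=H^n(U_k,\mathbf{R}\mathcal{H}om(P,P))$, and $\mathbf{R}\mathcal{H}om(P,P)$ is perfect, hence bounded below. Qcqs algebraic spaces have finite cohomological dimension for quasi-coherent sheaves only in good cases. Instead, we invoke the result that $\mathbf{R}\Gamma$ on a qcqs algebraic space has bounded-above behaviour on $D^{\geq 0}$-shifts up to a uniform constant. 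This is the statement that the standard $t$-structure is in the preferred equivalence class and compact objects lie in $D^{\leq A}$ for some $A$. It gives $\operatorname{Hom}(P,D^{\leq -N})=0$ for $N\gg0$ and $P\in D^{-}$ bounded, which yields the vanishing. If this finiteness is not directly available, we will prove it along the same induction using the Mayer--Vietoris triangle for $\mathbf{R}\Gamma$ across the square, where the affine and inductive pieces have bounded cohomological amplitude.

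This completes the induction, giving weak approximability at $k=n$, and $S=|X|$ yields $D_{\operatorname{qc}}(X)$. For the Noetherian coherence statement, we use that weakly approximable categories whose compact generator satisfies a Noetherian-type condition are coherent in Neeman's sense. In the Noetherian case, $\operatorname{Hom}(G',-)$ of bounded coherent objects is finitely generated over $\operatorname{Hom}$-rings by standard finiteness of coherent cohomology of proper-like local cohomology. We would cite the corresponding coherence criterion and check it by the same étale-local reduction.
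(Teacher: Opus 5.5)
The genuine gap is the Noetherian coherence claim. Coherence here is \cite[Definition 10.1.1]{Canonaco/Haesemeyer/Neeman/Stellari:2024}. It asks that, for a uniform $N$, every object of $\mathcal{T}^-_c$ sit in a triangle with a piece in $\mathcal{T}^-_c\cap\mathcal{T}^{\leq N}$ and a piece in $\mathcal{T}^b_c\cap\mathcal{T}^{\geq 0}$. It is not a Hom-finiteness condition on $G^\prime$, and you do not name a criterion relating the two. The finiteness you invoke is also false outside proper situations. For $X=\mathbb{A}^2_k\setminus\{0\}$, which is quasi-affine so that $\mathcal{O}_X$ is a compact generator, $\operatorname{Hom}(\mathcal{O}_X,\mathcal{O}_X[1])=H^1(X,\mathcal{O}_X)$ is not finitely generated over $\operatorname{End}(\mathcal{O}_X)=k[x,y]$. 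Nor does anything here glue coherence along recollements, so ``the same \'etale-local reduction'' is not available.

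The missing idea is to compute $\mathcal{T}^-_c$ for $\mathcal{T}=D_{\operatorname{qc},S}(X)$ directly. The standard $t$-structure lies in the preferred equivalence class \cite[Lemma A.9]{GuisadoVillaalgordo/Lank:2026}, and with respect to it $\mathcal{T}^-_c$ is the class of complexes pseudocoherent on $X$ and supported on $S$. One inclusion is two-out-of-three for $m$-pseudocoherence. The other approximates a pseudocoherent complex supported on $S$ by perfect complexes supported on $S$, which uses that $|X|\setminus S$ is quasi-compact. Hence $\mathcal{T}^-_c=D^-_{\operatorname{coh},S}(X)$ when $X$ is Noetherian. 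Standard truncations preserve this subcategory, so \cite[Example 10.1.2]{Canonaco/Haesemeyer/Neeman/Stellari:2024} gives coherence.

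Your weak approximability argument is the paper's route with the induction principle unrolled into a filtration: a recollement per elementary distinguished square, excision via $\mathbf{L}f^\ast$, Neeman's theorem for affine schemes, and Hom-vanishing for the perfect generator. Two points need fixing.

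First, your recollement is oriented backwards. With $i_\ast$ the inclusion of $D_{\operatorname{qc},S\cap T_k}(U_k)$ and $j^\ast$ restriction, you need left adjoints of both, and these do not exist in general. For instance, restriction $D(\mathbb{Z})\to D(\mathbb{Z}[1/p])$ sends $\prod_n\mathbb{Z}/p^n$ to a nonzero object while every factor restricts to $0$, so it does not preserve products. That inclusion and restriction preserve compacts gives instead right adjoints of $\mathbf{R}\Gamma_{T_k}$ and of $\mathbf{R}j_\ast$. The resulting recollement, as in \Cref{prop:A8}, has:
\begin{itemize}
    \item $D_{\operatorname{qc},S\cap U_{k-1}}(U_{k-1})$ in the role of $\mathcal{D}_Z$, via $\mathbf{L}j^\ast\dashv\mathbf{R}j_\ast\dashv j^!_S$;
    \item the supported piece in the role of $\mathcal{D}_U$, with $j_!$ the inclusion and $j^\ast=\mathbf{R}\Gamma_{T_k}$.
\end{itemize}
Since \Cref{thm:glueing_weak_approx} assumes both ends are weakly approximable, relabeling repairs this.

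Second, contrary to your remark, quasi-coherent cohomology has finite cohomological dimension on every quasi-compact quasi-separated algebraic space. That fact, not membership of the standard $t$-structure in the preferred class, gives $\operatorname{Hom}(P,D^{\leq -N}_{\operatorname{qc}}(U_k))=0$ for perfect $P$ and $N\gg 0$, and hence the needed vanishing. The paper cites \cite[Lemma 3.40]{GuisadoVillaalgordo/Lank:2026}; your Mayer--Vietoris fallback would also work.
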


\Cref{thm:glueing_weak_approx} is the analog for weak approximability of \cite[Theorem 5.1]{Burke/Neeman/Pauwels:2023} by Burke--Neeman--Pauwels.
Loc.\ cit.\ shows that $\mathcal{D}$ is approximable if $\mathcal{D}_Z$ and $\mathcal{D}_U$ are approximable and $\operatorname{Hom}(G^\prime,G^\prime[n])=0$ for $n\gg 0$.
The `only if' direction of \Cref{thm:glueing_weak_approx} is formal (cf.\ \cite[Remark 3]{Burke/Neeman/Pauwels:2023}).
Thus, the work lies in the `if' direction.

The proof in \cite{Burke/Neeman/Pauwels:2023} lifts approximations from $\mathcal{D}_Z$ to $\mathcal{D}$.
This is done by induction on the number of cones needed to build them \cite[Lemmas 5.2 to 5.5]{Burke/Neeman/Pauwels:2023}.
At each step, the bound on the lift is worse than on its target.
Hence, the constants grow with the number of cones.
Approximability of $\mathcal{D}_Z$ bounds this number.

No such bound need exist if $\mathcal{D}_Z$ is only weakly approximable.
Instead, we lift a single shift of a compact generator so that it lies in the same shifted aisle as its target (e.g.\ $\mathcal{D}^{\leq K}$).
See \Cref{lem:onecell}.
Consequently, the cone of a lift satisfies the same bound, which gives control on the desired objects.
See \Cref{prop:closure}.

A closure argument of this type also appears in unpublished work of Pauwels. 
See \Cref{rmk:weak_approx_spaces}.
Here the constants are explicit: if $A$ works for $\mathcal{D}_Z$, $\mathcal{D}_U$ and the vanishing $\operatorname{Hom}(G,\mathcal{D}^{\leq -A})=0$ for the compact generator $G$ of \Cref{lem:normalize}\eqref{lem:normalize2}, then $3A$ works for $\mathcal{D}$.

We discuss \Cref{cor:weak_approx_spaces}.
In general for schemes, $D_{\operatorname{qc},Z}$ need not be approximable \cite[Remark 8.1]{Neeman:2024}.
Hence, \cite[Theorem 5.1]{Burke/Neeman/Pauwels:2023} does not apply, and the weak analog is necessary.
The proof of \Cref{cor:weak_approx_spaces} combines Neeman's result for schemes \cite[Theorem 3.2(iv)]{Neeman:2024} with the induction principle for algebraic spaces \cite[\href{https://stacks.math.columbia.edu/tag/09IT}{Tag 09IT}]{StacksProject}.

\begin{ack}
    This question goes back to the author's residency at the Simons Laufer Mathematical Sciences Institute (formerly MSRI) in Spring 2024.
    The author thanks Fei Peng for helpful discussions, and thanks Bregje Pauwels for sharing her unpublished notes on this question (see \Cref{rmk:weak_approx_spaces}).
    The author was supported by the ERC Advanced Grant 101095900-TriCatApp.
\end{ack}

\paragraph*{\textbf{AI disclosure}}

The author used large language models (LLMs) for assistance with proofreading (e.g.\ grammar, notation, exposition) and checking proofs (e.g.\ finding gaps, pointing out errors).
The main results and methods are due to the author.

\section{Preliminaries}
\label{sec:prelim}

\subsection{Triangulated categories}
\label{sec:prelim_triangulated}

\subsubsection{Terminology}
\label{sec:prelim_terminology}

We fix some terminology. 
See \cite[\S 1]{Neeman:2001}.
Let $\mathcal{T}$ be a triangulated category.
Note that $\operatorname{Hom}(X,Y)$ is an abelian group.
Given a morphism $u$, write $-u$ for its additive inverse.
Consider a distinguished triangle
\begin{equation}
    \label{eq:triangle}
    X\xrightarrow{u} Y\xrightarrow{v} Z\xrightarrow{w} X[1].
\end{equation}
Then TR2 asserts that
\begin{displaymath}
    \begin{aligned}
        Y
        & \xrightarrow{v} Z\xrightarrow{w} X[1]\xrightarrow{-u[1]} Y[1],
        \\& Z[-1]\xrightarrow{-w[-1]} X\xrightarrow{u} Y\xrightarrow{v} Z
    \end{aligned}
\end{displaymath}
are also distinguished.
Passing from \eqref{eq:triangle} to the first (resp.\ second) of these is called \textbf{rotating} (resp.\ \textbf{rotating backwards}).
Observe that negating two of the three morphisms of a distinguished triangle gives a distinguished triangle. 
Indeed, $(1_X,-1_Y,1_Z)$ is an isomorphism of triangles from $(u,v,w)$ to $(-u,-v,w)$.
For every $n\in \mathbb{Z}$, there exists a distinguished triangle
\begin{displaymath}
    X[n]\xrightarrow{u[n]} Y[n]\xrightarrow{v[n]} Z[n]\xrightarrow{(-1)^n w[n]} X[n+1].
\end{displaymath}
We call this \textbf{shifting} the given distinguished triangle by $[n]$.
By TR1, every $f\colon X\to Y$ can be completed into a distinguished triangle 
\begin{displaymath}
    X\xrightarrow{f} Y\to C_f\to X[1].
\end{displaymath}
We call $C_f$ a \textbf{cone} of $f$.
Rotating backwards gives
\begin{displaymath}
    F_f\to X\xrightarrow{f} Y\to F_f[1].
\end{displaymath}
We call $F_f := C_f[-1]$ a \textbf{cocone} of $f$.
Both are unique up to (non-unique) isomorphism.
By combining TR1 and TR2, it follows that every morphism can be realized as the first, second, or third of some distinguished triangle.
Moreover, given a morphism of distinguished triangles, if two of the three vertical morphisms are isomorphisms, then so is the third.
Indeed, this can be shown by rotating and applying \cite[Proposition 1.1.20]{Neeman:2001}.

\subsubsection{Generation}
\label{sec:prelim_generation}

Let $\mathcal{T}$ be a triangulated category admitting small coproducts and $\mathcal{S},\mathcal{S}^\prime$ full subcategories.
Write $\mathcal{S} \star \mathcal{S}^\prime$ for the full subcategory of objects $X$ admitting a distinguished triangle $S\to X\to S^\prime \to S[1]$ with $S\in \mathcal{S}$ and $S^\prime \in \mathcal{S}^\prime$.
Write $\operatorname{Coprod}(\mathcal{S})$ for the smallest strictly full subcategory containing $\mathcal{S}$ that is closed under small coproducts and extensions, and $\operatorname{smd}(\mathcal{S})$ for the full subcategory of direct summands of objects of $\mathcal{S}$.
For $H\in \mathcal{T}$ and integers $a\leq b$ put
\begin{displaymath}
    H[a,b] := \{ H[-k] : a\leq k\leq b \},
    \qquad 
    \overline{\langle H\rangle}^{[a,b]} := \operatorname{smd}(\operatorname{Coprod}(H[a,b])).
\end{displaymath}

The subcategory of compact objects is denoted by $\mathcal{T}^c$, consisting of those $c \in \mathcal{T}$ for which $\operatorname{Hom}(c,-)$ preserves coproducts.
These form a triangulated subcategory of $\mathcal{T}$. 
The \textbf{localizing subcategory} generated by a collection $\mathcal{S}\subseteq \mathcal{T}$, denoted by $\overline{\langle \mathcal{S} \rangle}^{(-\infty,\infty)}$, is the smallest triangulated subcategory containing $\mathcal{S}$ and closed under small coproducts.
If $\mathcal{T}^c$ is essentially small, then we say that $\mathcal{T}$ is \textbf{compactly generated} when $\mathcal{T} = \overline{\langle \mathcal{T}^c \rangle}^{(-\infty,\infty)}$.
Equivalently, $\mathcal{T}$ is compactly generated if, for any $E \in \mathcal{T}$ satisfying $\operatorname{Hom}(P, E) = 0$ for all $P \in \mathcal{T}^c$, one has $E \cong 0$.
An object $G\in \mathcal{T}^c$ is a \textbf{compact generator} of $\mathcal{T}$ if every $E\in \mathcal{T}$ with $\operatorname{Hom}(G[n],E)=0$ for all $n\in \mathbb{Z}$ satisfies $E\cong 0$.
An object of $\mathcal{T}^c$ is a \textbf{classical generator} of $\mathcal{T}^c$ if the smallest thick subcategory of $\mathcal{T}^c$ containing it is $\mathcal{T}^c$.
If $\mathcal{T}$ is compactly generated, then classical generators for $\mathcal{T}^c$ coincide with compact generators for $\mathcal{T}$ \cite[\href{https://stacks.math.columbia.edu/tag/09SR}{Tag 09SR}]{StacksProject}.
We say that $\mathcal{T}$ is \textbf{singly compactly generated} if it admits a compact generator. 

\subsubsection{\texorpdfstring{$t$}{t}-structures}
\label{sec:prelim_t-structures_t_structures}

Let $\mathcal{T}$ be a triangulated category with shift functor $[1]$. 
A pair of strictly full subcategories $(\mathcal{T}^{\leq 0}, \mathcal{T}^{\geq 0})$ of $\mathcal{T}$ is a \textbf{$t$-structure} if:
\begin{itemize}
    \item $\operatorname{Hom}(A,B) = 0$ for all $A \in \mathcal{T}^{\leq 0}$ and $B \in \mathcal{T}^{\geq 0}[-1]$,
    \item $\mathcal{T}^{\leq 0}[1] \subseteq \mathcal{T}^{\leq 0}$ and $\mathcal{T}^{\geq 0}[-1] \subseteq \mathcal{T}^{\geq 0}$,
    \item for every $E \in \mathcal{T}$, there is a distinguished triangle
    \begin{displaymath}
        \tau^{\leq 0} E \to E \to \tau^{\geq 1} E \to (\tau^{\leq 0} E)[1]
    \end{displaymath}
    with $\tau^{\leq 0} E \in \mathcal{T}^{\leq 0}$ and $\tau^{\geq 1} E \in \mathcal{T}^{\geq 0}[-1]$.
\end{itemize}
This notion was introduced in \cite{Beilinson/Berstein/Deligne/Gabber:2018}. 
The above distinguished triangle is called the \textbf{truncation triangle} of $E$ with respect to the $t$-structure. 
For any $n \in \mathbb{Z}$, $(\mathcal{T}^{\leq n}, \mathcal{T}^{\geq n})$ is also a $t$-structure on $\mathcal{T}$ where $\mathcal{T}^{\leq n} := \mathcal{T}^{\leq 0}[-n]$ and $\mathcal{T}^{\geq n} := \mathcal{T}^{\geq 0}[-n]$. 
Define the subcategories:
\begin{displaymath}
    \mathcal{T}^{-}:= \bigcup_{n=0}^\infty \mathcal{T}^{\leq n},\quad \mathcal{T}^{+} := \bigcup^\infty_{n=0} \mathcal{T}^{\geq -n},\quad \mathcal{T}^b := \mathcal{T}^{-} \cap \mathcal{T}^{+}.
\end{displaymath}
Let $F \colon \mathcal{T} \to \mathcal{T}^\prime$ be an exact functor between triangulated categories equipped with $t$-structures $(\mathcal{T}^{\leq 0}, \mathcal{T}^{\geq 0})$ and $((\mathcal{T}^\prime)^{\leq 0}, (\mathcal{T}^\prime)^{\geq 0})$. 
We say that $F$ is \textbf{right $t$-exact} if $
F(\mathcal{T}^{\leq 0}) \subseteq (\mathcal{T}^\prime)^{\leq 0}$,
and \textbf{left $t$-exact} if 
$F(\mathcal{T}^{\geq 0}) \subseteq (\mathcal{T}^\prime)^{\geq 0}$. 
If both conditions hold, then $F$ is \textbf{$t$-exact}.

A strictly full subcategory $\mathcal{A} \subseteq \mathcal{T}$ is an \textbf{aisle} if it is closed under positive shifts and extensions, and the inclusion $\mathcal{A} \to \mathcal{T}$ admits a right adjoint. 
Given a $t$-structure $(\mathcal{T}^{\leq 0}, \mathcal{T}^{\geq 0})$, the subcategory $\mathcal{T}^{\leq 0}$ is an aisle. 
By \cite[\S 1, Proposition]{Keller/Vossieck:1988}, any aisle $\mathcal{A}$ determines a $t$-structure $(\mathcal{A}, \mathcal{A}^\perp[1])$, where
\begin{displaymath}
    \mathcal{A}^\perp := \{ T \in \mathcal{T} : \operatorname{Hom}(A,T) = 0 \text{ for all } A \in \mathcal{A} \}.
\end{displaymath}
The subcategories $\mathcal{T}^{\leq 0}$ and $\mathcal{T}^{\geq 0}$ are called the \textbf{aisle} and \textbf{coaisle} of the $t$-structure, respectively. 
More generally, a \textbf{preaisle} is a strictly full subcategory closed under positive shifts and extensions. 
By \cite[Lemma 1.3 \& Corollary 1.4]{AlonsoTarrio/Lopez/Salorio:2003}, aisles are closed under direct summands and under small coproducts when these exist in $\mathcal{T}$.

Assume $\mathcal{T}$ admits small coproducts. 
A preaisle $\mathcal{A} \subseteq \mathcal{T}$ is called \textbf{cocomplete} if it is closed under all coproducts in $\mathcal{T}$. 
Given $\mathcal{S}\subseteq \mathcal{T}$, $\overline{\langle \mathcal{S} \rangle}^{(-\infty, 0]}$ is defined to be the smallest cocomplete preaisle containing $\mathcal{S}$. 
By \cite[Theorem 2.3]{Neeman:2021b}, if $\mathcal{T}$ is well generated (e.g.\ compactly generated), $\overline{\langle \mathcal{S} \rangle}^{(-\infty, 0]}$ is an aisle whenever $\mathcal{S}$ is essentially small. 
Furthermore, if $\mathcal{A}\subseteq \mathcal{T}^c$ is closed under positive shifts, then \cite[Theorem 2.3.3 \& Remark 2.3.4]{Canonaco/Haesemeyer/Neeman/Stellari:2024} (which generalizes \cite[Theorem A.1 \& Proposition A.2]{AlonsoTarrio/Lopez/Salorio:2003}) says that $\operatorname{Coprod}(\mathcal{A})$ is the aisle $\overline{\langle \mathcal{A} \rangle}^{(-\infty, 0]}$. 
An aisle $\mathcal{U}$ on $\mathcal{T}$ is \textbf{compactly generated} when there exists a collection of compact objects $\mathcal{Q} \subseteq \mathcal{T}^c$ satisfying $\overline{\langle \mathcal{Q} \rangle}^{(-\infty, 0]} = \mathcal{U}$. 
In this case, we say $\mathcal{U}$ is compactly generated by $\mathcal{Q}$. 
We say a $t$-structure is \textbf{compactly generated} if its aisle is as such. 
For instance, if $G \in \mathcal{T}^c$, then 
\begin{displaymath}
    \operatorname{Coprod}( \{ G[n] : n\geq 0 \}) = \overline{\langle G \rangle}^{(-\infty, 0]} .
\end{displaymath}

Consider a pair of $t$-structures $(\mathcal{T}_1^{\leq 0},\mathcal{T}_1^{\geq 0})$ and $(\mathcal{T}_2^{\leq 0},\mathcal{T}_2^{\geq 0})$ on $\mathcal{T}$.
We say they are \textbf{equivalent} if there exists $A\geq 0$ such that $\mathcal{T}_1^{\leq -A}\subseteq \mathcal{T}_2^{\leq 0}\subseteq \mathcal{T}_1^{\leq A}$.
If $\mathcal{T}$ has a single compact generator, the \textbf{preferred equivalence class} is the equivalence class of the $t$-structure compactly generated by any compact generator.
By \cite[Remark 1.19]{Neeman:2026}, the preferred equivalence class does not depend on the choice of compact generator. 

If $\mathcal{T}$ has small coproducts and is equipped with a $t$-structure $(\mathcal{T}^{\leq 0}, \mathcal{T}^{\geq 0})$, let $\mathcal{T}^-_c$ be the collection of $E\in \mathcal{T}$ such that for every $n>0$ there exists a distinguished triangle 
\begin{displaymath}
    D \to E \to F \to D[1]
\end{displaymath}
with $D\in \mathcal{T}^c$ and $F\in \mathcal{T}^{\leq -n}$. 
Moreover, let $\mathcal{T}^b_c := \mathcal{T}^b \cap \mathcal{T}^-_c$. 
Furthermore, if $\mathcal{T}$ is weakly approximable and $(\mathcal{T}^{\leq 0}, \mathcal{T}^{\geq 0})$ is in the preferred equivalence class, then $\mathcal{T}^?$ and $\mathcal{T}^\#_c$ are independent of the $t$-structure representing the equivalence class for $?\in \{-,+,b\}$ and $\# \in \{-,b\}$. 
See \cite[Remark 4.1.5]{Canonaco/Haesemeyer/Neeman/Stellari:2024}.

\subsubsection{Recollements}
\label{sec:prelim_recollements}

We briefly recall the notion of a recollement. 
See \cite[\S 1.4]{Beilinson/Berstein/Deligne/Gabber:2018} for details. 
A \textbf{recollement} is a diagram of triangulated categories and exact functors of the form 
\begin{displaymath}
        \begin{tikzcd}[column sep=6em]
            {\mathcal{D}_Z} & {\mathcal{D}} & {\mathcal{D}_U}
            \arrow["{i_\ast}" description, from=1-1, to=1-2]
            \arrow["{i^\ast}"', bend right=35, from=1-2, to=1-1]
            \arrow["{i^!}", bend left=35, from=1-2, to=1-1]
            \arrow["{j^\ast}" description, from=1-2, to=1-3]
            \arrow["{j_!}"', bend right=35, from=1-3, to=1-2]
            \arrow["{j_\ast}", bend left=35, from=1-3, to=1-2]
        \end{tikzcd}
    \end{displaymath}
satisfying:
\begin{itemize}
    \item $i^\ast \dashv i_\ast \dashv i^!$ and $j_! \dashv j^\ast \dashv j_\ast$ (i.e.\ adjoint triples)
    \item $i_\ast, j_!, j_\ast$ are fully faithful
    \item $\ker (j^\ast)$ coincides with the strictly full subcategory on objects of the form $i_\ast(T)$ where $T\in \mathcal{D}_Z$.
\end{itemize}
In such a case, there are distinguished triangles
\begin{displaymath}
    \begin{aligned}
        j_! j^\ast E 
        & \xrightarrow{\epsilon_E} E 
        \xrightarrow{\eta_E} i_\ast i^\ast E 
        \xrightarrow{\varphi_E} j_! j^\ast E[1],
        \\& i_\ast i^! E \to E \to j_\ast j^\ast E \to i_\ast i^! E[1]
    \end{aligned}
\end{displaymath}
which are functorial in $E\in \mathcal{D}$.
In particular, the natural transformations between these functors are given by the (co)units of the relevant adjoint pairs. 
Since $j_!$, $j^\ast$, $i_\ast$, and $i^\ast$ are left adjoints, they preserve coproducts. 
Additionally, if each triangulated category is compactly generated, $i_\ast$ and $j^\ast$ admitting right adjoints implies $i^\ast$ and $j_!$ preserve compact objects \cite[Theorem 5.1]{Neeman:1996}.
Given $E\in \mathcal{D}_Z$ and $B \in \mathcal{D}_U$, $j^\ast i_\ast E \cong 0$, $i^! j_\ast B \cong 0$, and $i^\ast j_! B \cong 0$. 

Assume $(\mathcal{D}_Z^{\leq 0}, \mathcal{D}_Z^{\geq 0})$ and $(\mathcal{D}_U^{\leq 0}, \mathcal{D}_U^{\geq 0})$ are $t$-structures respectively on $\mathcal{D}_Z$ and $\mathcal{D}_U$. 
It is possible to `glue' these aisles to obtain a new $t$-structure on $\mathcal{D}$. 
Specifically, the glued $t$-structure is given by 
\begin{displaymath}
    \begin{aligned}
        \mathcal{D}^{\leq 0} &\colonequals  \left\{ E\in \mathcal{D} : i^\ast E \in \mathcal{D}_Z^{\leq 0}, j^\ast E \in \mathcal{D}_U^{\leq 0} \right\},
        \\ \mathcal{D}^{\geq 0} &\colonequals  \left\{ E\in \mathcal{D} : i^! E \in \mathcal{D}_Z^{\geq 0}, j^\ast E \in \mathcal{D}_U^{\geq 0} \right\}.
    \end{aligned}
\end{displaymath}
Moreover, a few general facts include: $i^\ast$ and $j_!$ are right $t$-exact;  $i_\ast$ and $j^\ast$ are $t$-exact; and $i^!$ and $j_\ast$ are left $t$-exact.
See \cite[1.4.10 \& 1.3.17.(iii)]{Beilinson/Berstein/Deligne/Gabber:2018} for details.

\subsubsection{Weak approximability}
\label{sec:prelim_weak_approx}

We recall the definition of weak approximability for triangulated categories.
See \cite[Definition 1.25]{Neeman:2026}.

\begin{definition}
    \label{def:weak_approx}
    A triangulated category $\mathcal{T}$ is \textbf{weakly approximable} if there exist a compact generator $G$, a $t$-structure $(\mathcal{T}^{\leq 0},\mathcal{T}^{\geq 0})$ and an integer $A>0$ such that:
    \begin{enumerate}
        \item \label{def:weak_approx1} $G\in \mathcal{T}^{\leq A}$ and $\operatorname{Hom}(G,\mathcal{T}^{\leq -A})=0$;
        \item \label{def:weak_approx2} every $F\in \mathcal{T}^{\leq 0}$ admits a distinguished triangle $E\to F\to D\to E[1]$ with $E\in \overline{\langle G\rangle}^{[-A,A]}$ and $D\in \mathcal{T}^{\leq -1}$.
    \end{enumerate}
    We call $(G,\mathcal{T}^{\leq 0},A)$ \textbf{weak approximation data}.
\end{definition}

In our convention, compactly generated triangulated categories a priori have small coproducts.
In \Cref{def:weak_approx}, \eqref{def:weak_approx1} is the condition $G[A]\in \mathcal{T}^{\leq 0}$, $\operatorname{Hom}(G[-A],\mathcal{T}^{\leq 0})=0$ of \cite[2.9]{Burke/Neeman/Pauwels:2023}, since $\operatorname{Hom}(G[-A],\mathcal{T}^{\leq 0}) = \operatorname{Hom}(G,\mathcal{T}^{\leq -A})$.

A weakly approximable triangulated category $\mathcal{T}$ is \textbf{coherent} if, for any $t$-structure $(\mathcal{T}^{\leq 0}, \mathcal{T}^{\geq 0})$ in the preferred equivalence class, there exists an integer $N > 0$ such that every object $Y \in \mathcal{T}^-_c$ admits a distinguished triangle 
\begin{displaymath}
    X \to Y \to Z \to X[1]
\end{displaymath}
with $X \in \mathcal{T}^-_c \cap \mathcal{T}^{\leq N}$ and $Z\in \mathcal{T}^b_c \cap \mathcal{T}^{\geq 0}$.
See \cite[Definition 10.1.1]{Canonaco/Haesemeyer/Neeman/Stellari:2024}.

\subsection{Algebraic spaces}
\label{sec:preliminaries_stacks}

We follow \cite{StacksProject}. 
Throughout, we work over a fixed base scheme. 
Let $X$ be a quasi-compact quasi-separated algebraic space. 

\subsubsection{Notions}
\label{sec:prelim_notions}

An \textbf{\'{e}tale presentation} of $X$ is an \'{e}tale, finitely presented, surjective morphism to $X$ from a scheme. 
Denote by $|X|$ the underlying topological space of $X$. 
It consists of equivalence classes of morphisms from fields to $X$ \cite[\href{https://stacks.math.columbia.edu/tag/03BT}{Tag 03BT}]{StacksProject}. 

\subsubsection{Categories \& functors}
\label{sec:prelim_categories}

$\operatorname{Mod}(X)$ is the Grothendieck abelian category of sheaves of $\mathcal{O}_X$-modules on the small \'{e}tale site $X_{\textrm{\'{e}tale}}$ of $X$ \cite[\href{https://stacks.math.columbia.edu/tag/03LT}{Tag 03LT}]{StacksProject}. 
$\operatorname{Qcoh}(X)$ is the full subcategory of $\operatorname{Mod}(X)$ consisting of quasi-coherent sheaves. 
$D(X) := D(\operatorname{Mod}(X))$ is the derived category of $\operatorname{Mod}(X)$. 
$D_{\operatorname{qc}}(X)$ is the full subcategory of $D(X)$ consisting of complexes with quasi-coherent cohomology sheaves. 
$\operatorname{Perf}(X)$ is the full subcategory of perfect complexes in $D_{\operatorname{qc}}(X)$. 
If $X$ is Noetherian, then $\operatorname{coh}(X)$ is the full subcategory of $\operatorname{Mod}(X)$ consisting of coherent sheaves, and $D^?_{\operatorname{coh}}(X)$ is the full subcategory of $D(X)$ consisting of complexes with coherent cohomology sheaves with familiar decorations $?\in \{-,b,+\}$.

Let $f\colon Y \to X$ be a morphism of quasi-compact quasi-separated algebraic spaces. 
By \cite[\href{https://stacks.math.columbia.edu/tag/07A6}{Tag 07A6}]{StacksProject}, there exists an adjoint pair 
\begin{displaymath}
    \mathbf{L}f^\ast \colon D(X) \leftrightarrows D(Y) \colon \mathbf{R}f_\ast.
\end{displaymath}
By \cite[\href{https://stacks.math.columbia.edu/tag/08FA}{Tags 08FA} \& \href{https://stacks.math.columbia.edu/tag/08F4}{08F4}]{StacksProject}, $\mathbf{L}f^\ast$ and $\mathbf{R}f_\ast$ preserve quasi-coherent cohomology.
Moreover, \cite[\href{https://stacks.math.columbia.edu/tag/07A6}{Tag 07A6}]{StacksProject} says the restrictions of $\mathbf{L}f^\ast$ and $\mathbf{R}f_\ast$ form an adjoint pair on $D_{\operatorname{qc}}$. 

\subsubsection{Support}
\label{sec:preliminaries_stacks_support}

For $M \in \operatorname{Qcoh}(X)$, $\operatorname{Supp}(M) :=|p|(\operatorname{Supp}(p^\ast M)) \subseteq |X|$ where $p \colon U \to X$ is any \'{e}tale presentation.
By \cite[\href{https://stacks.math.columbia.edu/tag/07TY}{Tag 07TY}]{StacksProject}, $\operatorname{Supp}$ is independent of the \'{e}tale presentation. 
It coincides with the notion of support for abelian sheaves on the small \'{e}tale site \cite[\href{https://stacks.math.columbia.edu/tag/04K7}{Tag 04K7}]{StacksProject}.
More generally, if $E \in D_{\operatorname{qc}}(X)$, 
\begin{displaymath}
    \operatorname{Supp}(E) := \bigcup_{j \in \mathbb{Z}} \operatorname{Supp}(\mathcal{H}^j(E)) \subseteq |X|.
\end{displaymath}
Given a closed subset $Z \subseteq |X|$, $E\in D_{\operatorname{qc}}(X)$ is \textbf{supported on $Z$} if $\operatorname{Supp}(E) \subseteq Z$. 
Denote by $D_{\operatorname{qc},Z}(X)$ the strictly full subcategory of $D_{\operatorname{qc}}(X)$ consisting of objects supported on $Z$. 
If $X$ is Noetherian, we can define $D_{\operatorname{coh},Z}(X)$ analogously. 

\subsubsection{Perfect complexes}
\label{sec:prelim_perfects}

Perfect complexes are defined on the small \'{e}tale site of $X$. 
See \cite[\href{https://stacks.math.columbia.edu/tag/08G4}{Tag 08G4}]{StacksProject}. 
In particular, a complex is \textbf{strictly perfect} if it is a bounded complex whose terms are direct summands of finite free modules, whereas a complex is \textbf{perfect} if it is locally strictly perfect. 
For any $Z\subseteq |X|$ closed such that $|X|\setminus Z$ is quasi-compact: perfect complexes supported on $Z$ coincide with compacts of $D_{\operatorname{qc},Z}(X)$ and $D_{\operatorname{qc},Z}(X)$ is singly compactly generated \cite[\href{https://stacks.math.columbia.edu/tag/0AED}{Tags 0AED}, \href{https://stacks.math.columbia.edu/tag/09M8}{09M8}, \& \href{https://stacks.math.columbia.edu/tag/0AEC}{0AEC}]{StacksProject}.

\section{Auxiliary results}
\label{sec:auxillary}

\begin{lemma}
    \label{lem:closures}
    Let $\mathcal{T}$ and $\mathcal{T}^\prime$ be triangulated categories which admit small coproducts. 
    Choose $H\in \mathcal{T}$ and integers $a\leq b$.
    Then:
    \begin{enumerate}
        \item \label{lem:closures1} $\overline{\langle H\rangle}^{[a,b]}$ is closed under small coproducts, extensions, and direct summands; in fact, it is the smallest strictly full subcategory of $\mathcal{T}$ closed under these operations that contains $H[a,b]$.
        \item \label{lem:closures2} $\overline{\langle H\rangle}^{[a,b]}[s] = \overline{\langle H\rangle}^{[a-s,b-s]}$ and $\overline{\langle H\rangle}^{[a,b]} \subseteq \overline{\langle H\rangle}^{[a^\prime,b^\prime]}$ for all $s\in \mathbb{Z}$, $a^\prime\leq a$, and $b\leq b^\prime$.
        \item \label{lem:closures3} $F(\overline{\langle H\rangle}^{[a,b]}) \subseteq \overline{\langle H^\prime\rangle}^{[a+c,b+d]}$ for any small coproduct preserving exact $F\colon \mathcal{T}\to \mathcal{T}^\prime$ and $H^\prime \in \mathcal{T}^\prime$ with $F(H)\in \overline{\langle H^\prime\rangle}^{[c,d]}$.
        \item \label{lem:closures4} Let $Y\in \mathcal{T}$. If $\operatorname{Hom}(H[-k],Y)=0$ for all $a\leq k\leq b$, then $\operatorname{Hom}(X,Y)=0$ for all $X\in \overline{\langle H\rangle}^{[a,b]}$. If $H$ is compact, then $\{Y\in \mathcal{T} : \operatorname{Hom}(H,Y)=0\}$ is closed under small coproducts, extensions, and direct summands.
        \item \label{lem:closures5} $\operatorname{smd}(\operatorname{Coprod}(H[a,b])) \subseteq \operatorname{Coprod}(H[a,b+1])$.
    \end{enumerate}
\end{lemma}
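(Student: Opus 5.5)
Each item is proved separately, with (5) doing most of the work for (1); the standing tools are Eilenberg swindles and the Neeman-type trick for direct summands.

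\textbf{Item (5).} Let $X\oplus X'\cong C\in\operatorname{Coprod}(H[a,b])$. Put $Q=\coprod_{\mathbb{N}}C$, which lies in $\operatorname{Coprod}(H[a,b])$, so $Q\cong X\oplus Y$ with $Y=X'\oplus\coprod_{\mathbb N}C$. Let $f$ be the composite $Q\to X\to Q$ of the projection and the inclusion. The plan is to realize $X$ as a homotopy colimit of the idempotent-like map on $Q$, via a triangle $\coprod_{\mathbb N} Q\xrightarrow{1-\text{shift}\circ f}\coprod_{\mathbb N} Q\to X\to$. Rotating this gives $X\in \operatorname{Coprod}(H[a,b])\star \operatorname{Coprod}(H[a,b])[1]$. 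Since $\operatorname{Coprod}(H[a,b])[1]=\operatorname{Coprod}(H[a-1,b-1])$ (shift of $H[-k]$ is $H[-(k-1)]$), the one-step shift lands in $H[a-1,b-1]$. I expect this indexing to need care: depending on which direction the rotation goes, the shift moves the range to either $b+1$ or $a-1$, and I will check that it matches the stated $[a,b+1]$ with the convention $H[a,b]=\{H[-k]\}$. This triangle step is the main obstacle.

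\textbf{Item (1).} Closure under summands is immediate, since $\operatorname{smd}$ is idempotent. For coproducts, a coproduct of summands $X_i\mid C_i$ is a summand of $\coprod C_i$. Extensions are harder. Given $X\to E\to X'$ with $X\oplus Y\cong C$ and $X'\oplus Y'\cong C'$, add the trivial triangles $Y\to Y\to 0$ and $0\to Y'\to Y'$. This makes $E\oplus Y\oplus Y'$ an extension of $C'$ by $C$, so it lies in $\operatorname{Coprod}$, and $E$ is a summand of it. Minimality then follows because any subcategory with these closures containing $H[a,b]$ contains $\operatorname{Coprod}(H[a,b])$ and hence its summands.

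\textbf{Items (2) and (3).} Item (2) holds because shifting is an automorphism preserving all the operations, combined with $H[a,b][s]=H[a-s,b-s]$ and the monotonicity of the generating sets. For item (3), the preimage $\{X: F(X)\in\overline{\langle H'\rangle}^{[a+c,b+d]}\}$ is closed under coproducts, extensions and summands, because $F$ preserves coproducts, is exact and is additive. It contains $H[-k]$, since $F(H)[-k]\in\overline{\langle H'\rangle}^{[c+k,d+k]}$ by (2). The claim then follows by the minimality in (1).

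\textbf{Item (4).} Similarly, ${}^\perp Y$ is closed under coproducts (as $\operatorname{Hom}(\coprod X_i,Y)=\prod\operatorname{Hom}(X_i,Y)$), under extensions (by the long exact sequence) and under summands, so (1) applies. For compact $H$, the class $H^\perp$ is closed under coproducts by compactness, under extensions by exactness, and under summands trivially.
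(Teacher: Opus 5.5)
Your items (1)--(4) are correct and follow the paper's argument: adding trivial triangles for closure under extensions, minimality, and the preimage and orthogonal-class arguments. Your proof of (1) does not actually use (5), and it does not need to.

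\textbf{The gap is in (5).} It is exactly the indexing issue you flagged, and it does not resolve in your favour. Your homotopy-colimit triangle $\coprod_{\mathbb N}Q\to\coprod_{\mathbb N}Q\to X\to(\coprod_{\mathbb N}Q)[1]$ exhibits $X$ as a \emph{cone} of a morphism between objects of $\operatorname{Coprod}(H[a,b])$. The only extension you can read off from such a triangle is $X\in\operatorname{Coprod}(H[a,b])\star\operatorname{Coprod}(H[a,b])[1]=\operatorname{Coprod}(H[a,b])\star\operatorname{Coprod}(H[a-1,b-1])\subseteq\operatorname{Coprod}(H[a-1,b])$.

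There is no freedom in the ``direction of rotation''. Rotating backwards gives $X[-1]\to\coprod Q\to\coprod Q\to X$, which does not present $X$ as an extension at all. So your argument proves $\operatorname{smd}(\operatorname{Coprod}(H[a,b]))\subseteq\operatorname{Coprod}(H[a-1,b])$. That statement is true, but it is a different one, and $\operatorname{Coprod}(H[a-1,b])\not\subseteq\operatorname{Coprod}(H[a,b+1])$ in general. For example, take $H=k$ in $D(k)$ with $a=b=0$: then $k[1]$ lies in the former but not the latter. To land in $[a,b+1]$ you must realize $X$ as a \emph{cocone} of a morphism between objects of $\operatorname{Coprod}(H[a,b])$.

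\textbf{The fix is already in your setup.} Your $Y=X'\oplus\coprod_{\mathbb N}C$ is isomorphic to $Q$ itself: regroup $X'\oplus(X\oplus X')\oplus(X\oplus X')\oplus\cdots$ as $(X'\oplus X)\oplus(X'\oplus X)\oplus\cdots$. Hence $Q\cong X\oplus Q$, which is the Eilenberg swindle. Take the split triangle $X\to X\oplus Q\to Q\xrightarrow{0}X[1]$ and rotate it backwards to get $Q[-1]\to X\to X\oplus Q\to Q$. Since $X\oplus Q\cong Q\in\operatorname{Coprod}(H[a,b])$ and $Q[-1]\in\operatorname{Coprod}(H[a+1,b+1])$, you get $X\in\operatorname{Coprod}(H[a+1,b+1])\star\operatorname{Coprod}(H[a,b])\subseteq\operatorname{Coprod}(H[a,b+1])$. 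This is the paper's proof, and it needs no homotopy colimits or idempotent splitting.
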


\begin{proof}
    First we prove \eqref{lem:closures1}.
    By definition, $\overline{\langle H\rangle}^{[a,b]} := \operatorname{smd}(\operatorname{Coprod}(H[a,b]))$. 
    Let $\Lambda$ be a set.
    Choose $X_\lambda \in \overline{\langle H\rangle}^{[a,b]}$ where $\lambda \in \Lambda$. 
    For each $\lambda \in \Lambda$, there exists $X^\prime_\lambda \in \mathcal{T}$ such that $X_\lambda \oplus X^\prime_\lambda\in \operatorname{Coprod}(H[a,b])$.
    Since $\operatorname{Coprod}(H[a,b])$ is closed under small coproducts, $\oplus_{\lambda\in \Lambda} (X_\lambda \oplus X^\prime_\lambda)\in \operatorname{Coprod}(H[a,b])$. 
    Note that $\oplus_{\lambda\in \Lambda}X_\lambda$ is a direct summand of $\oplus_{\lambda\in \Lambda} (X_\lambda \oplus X^\prime_\lambda)$. 
    Hence, $\overline{\langle H\rangle}^{[a,b]}$ is closed under small coproducts. 

    Consider a distinguished triangle 
    \begin{displaymath}
        A \to B \to C \to A[1]
    \end{displaymath}
    where $A,C\in \overline{\langle H\rangle}^{[a,b]}$. 
    We can find $A^\prime,C^\prime$ such that $A\oplus A^\prime, C\oplus C^\prime \in \operatorname{Coprod}(H[a,b])$. 
    There exist distinguished triangles
    \begin{displaymath}
        A^\prime \xrightarrow{1_{A^\prime}} A^\prime \to 0 \to A^\prime[1]
    \end{displaymath}
    and \begin{displaymath}
        0 \to C^\prime \xrightarrow{1_{C^\prime}} C^\prime \to 0[1].
    \end{displaymath}
    Taking the direct sum of these distinguished triangles yields a distinguished triangle 
    \begin{displaymath}
        \begin{aligned}
            A\bigoplus A^\prime \bigoplus 0 
            \to B \bigoplus A^\prime \bigoplus C^\prime
            \to C \bigoplus 0 \bigoplus C^\prime \to A[1]\bigoplus A^\prime[1] \bigoplus 0.
        \end{aligned}
    \end{displaymath}
    See \cite[Proposition 1.2.1]{Neeman:2001}. 
    Since $A\oplus A^\prime, C\oplus C^\prime\in  \operatorname{Coprod}(H[a,b])$, it follows that $B \oplus A^\prime \oplus C^\prime\in \operatorname{Coprod}(H[a,b])$ because the category is closed under extensions.
    Consequently, $B \in \overline{\langle H\rangle}^{[a,b]}$.

    To see that $\overline{\langle H\rangle}^{[a,b]}$ is closed under direct summands, use the fact that a direct summand of a direct summand remains a direct summand.

    Lastly, we show the minimality claim. 
    Let $\mathcal{B}$ be a strictly full subcategory of $\mathcal{T}$ closed under small coproducts, extensions, and direct summands that contains $H[a,b]$.
    In this case, $\operatorname{Coprod}(H[a,b])\subseteq \mathcal{B}$. 
    Indeed, $\mathcal{B}$ is closed under small coproducts and extensions, and it contains $H[a,b]$. 
    Moreover, $\operatorname{smd}(\operatorname{Coprod}(H[a,b]))\subseteq \mathcal{B}$ because $\mathcal{B}$ is closed under direct summands. 

    Second we prove \eqref{lem:closures2}. 
    The autoequivalence $[s]\colon \mathcal{T} \to \mathcal{T}$ preserves extensions, small coproducts, and direct summands. 
    Hence, $\overline{\langle H\rangle}^{[a,b]}[s]$ is the smallest strictly full subcategory containing $(H[a,b])[s]$ which is closed under extensions, small coproducts, and direct summands.
    However, shifting by $s$ yields 
    \begin{displaymath}
        \begin{aligned}
            (H[a,b])[s] 
            &= \{ H [-k][s] : a\leq k \leq b \} 
            \\&= \{ H [-k+s] : a\leq k \leq b \}
            \\&=\{ H [-l] : a-s\leq l \leq b-s \}
            \\&=: H[a-s,b-s].
        \end{aligned}
    \end{displaymath}
    It follows that $\overline{\langle H\rangle}^{[a,b]}[s] = \overline{\langle H\rangle}^{[a-s,b-s]}$. 
    Moreover, as $H[a,b]\subseteq H[a^\prime,b^\prime]$, \eqref{lem:closures1} implies $\overline{\langle H\rangle}^{[a,b]} \subseteq \overline{\langle H\rangle}^{[a^\prime,b^\prime]}$.

    Third we prove \eqref{lem:closures3}.
    Choose $a\leq k\leq b$.
    Note that $F$ is exact and $F(H)\in \overline{\langle H^\prime\rangle}^{[c,d]}$.
    Hence, by \eqref{lem:closures2},
    \begin{displaymath}
        F(H[-k])\cong F(H)[-k]
        \in \overline{\langle H^\prime\rangle}^{[c,d]}[-k] 
        = \overline{\langle H^\prime\rangle}^{[c+k,d+k]}\subseteq \overline{\langle H^\prime\rangle}^{[a+c,b+d]}.
    \end{displaymath}
    Set $\mathcal{S}$ to be the full subcategory of $E\in \mathcal{T}$ such that $F(E)\in \overline{\langle H^\prime\rangle}^{[a+c,b+d]}$.
    It follows that $H[a,b]\subseteq \mathcal{S}$.
    By \eqref{lem:closures1}, $\overline{\langle H^\prime\rangle}^{[a+c,b+d]}$ is strictly full and closed under small coproducts, extensions, and direct summands.
    Since $F$ preserves small coproducts, $\mathcal{S}$ has the same properties.
    By minimality, \eqref{lem:closures1} implies $\overline{\langle H\rangle}^{[a,b]}\subseteq \mathcal{S}$.
    Thus, \eqref{lem:closures3} follows.

    Next we prove \eqref{lem:closures4}. 
    It is easy to see that $\{E : \operatorname{Hom}(E,Y)=0\}$ is strictly full. 
    Let $\Lambda$ be a set. 
    Choose $X_\lambda \in \{E : \operatorname{Hom}(E,Y)=0\}$.
    By the universal property of coproducts, there exists an isomorphism
    \begin{displaymath}
        \operatorname{Hom}(\bigoplus_{\lambda \in \Lambda} X_\lambda, Y) 
        \cong \prod_{\lambda \in \Lambda} \operatorname{Hom}(X_\lambda, Y)
        \cong 0.
    \end{displaymath} 
    See \cite[Example 1.1.13]{Neeman:2001} applied to $\mathcal{T}^{op}$. 
    The last isomorphism follows from the fact that each $X_\lambda \in \{E : \operatorname{Hom}(E,Y)=0\}$.
    Hence, $\{E : \operatorname{Hom}(E,Y)=0\}$ is closed under small coproducts. 
    Let $A,B\in \{E : \operatorname{Hom}(E,Y)=0\}$.
    Consider an extension $C$ given by a distinguished triangle 
    \begin{displaymath}
        A \to C \to B \to A[1].
    \end{displaymath}
    Apply $\operatorname{Hom}(-,Y)$ to obtain a long exact sequence in Hom sets 
    \begin{displaymath}
        \cdots \to \operatorname{Hom}(B,Y) \to \operatorname{Hom}(C,Y) \to \operatorname{Hom}(A,Y) \to \cdots.
    \end{displaymath}
    Hence, $\operatorname{Hom}(C,Y) \cong 0$ if 
    \begin{displaymath}
        \operatorname{Hom}(A,Y) \cong \operatorname{Hom}(B,Y) \cong 0.
    \end{displaymath}
    This implies $\{E : \operatorname{Hom}(E,Y)=0\}$ is closed under extensions.
    Moreover, if $A\oplus B \in \{E : \operatorname{Hom}(E,Y)=0\}$, then 
    \begin{displaymath}
        \operatorname{Hom}(A\oplus B,Y)\cong \operatorname{Hom}(A,Y) \bigoplus \operatorname{Hom}(B,Y) \cong 0.
    \end{displaymath} 
    Thus, $\{E : \operatorname{Hom}(E,Y)=0\}$ is closed under direct summands. 
    The hypothesis asserts that $H[a,b]\subseteq \{E : \operatorname{Hom}(E,Y)=0\}$.
    By \eqref{lem:closures1}, $\overline{\langle H \rangle}^{[a,b]}\subseteq \{E : \operatorname{Hom}(E,Y)=0\}$.
    A similar argument with $H$ compact proves the second claim for \eqref{lem:closures4}.
    Indeed, the compactness of $H$ gives the closure under coproducts, whereas the other conditions are analogous.
    
    Lastly we prove \eqref{lem:closures5}. 
    Let $X\in \operatorname{smd}(\operatorname{Coprod}(H[a,b]))$.
    Choose $X^\prime$ such that $X\oplus X^\prime \in \operatorname{Coprod}(H[a,b])$.
    The so-called Eilenberg swindle yields an isomorphism
    \begin{displaymath}
        \begin{aligned}
            X \oplus (\bigoplus_{n\in \mathbb{N}} (X\oplus X^\prime)) 
            &\cong X \oplus (\bigoplus_{n\in \mathbb{N}} (X^\prime \oplus X)) 
            \\&\cong X \oplus (X^\prime \oplus X) \oplus (X^\prime \oplus X) \oplus \cdots
            \\&\cong (X \oplus X^\prime) \oplus (X \oplus X^\prime) \oplus \cdots
            \\&\cong \bigoplus_{n\in \mathbb{N}} (X \oplus X^\prime).
        \end{aligned}
    \end{displaymath}
    Consequently, there exists a distinguished triangle 
    \begin{displaymath}
        X \to X \oplus (\bigoplus_{n\in \mathbb{N}} (X\oplus X^\prime)) \to \bigoplus_{n\in \mathbb{N}}(X\oplus X^\prime) \to X[1].
    \end{displaymath}
    Rotating backwards yields 
    \begin{displaymath}
        (\bigoplus_{n\in \mathbb{N}}( X\oplus X^\prime))[-1] 
        \to X \to X \oplus (\bigoplus_{n\in \mathbb{N}} (X\oplus X^\prime)) \to \bigoplus_{n\in \mathbb{N}}( X\oplus X^\prime).
    \end{displaymath}
    Since $[a,b]\subseteq [a,b+1]$, $\operatorname{Coprod}(H[a,b]) \subseteq \operatorname{Coprod}(H[a,b+1])$.
    Observe that 
    \begin{displaymath}
        \begin{aligned}
            (\bigoplus_{n\in \mathbb{N}} (X\oplus X^\prime))[-1]
            &\in \operatorname{Coprod}(H[a,b])[-1] 
            \\&= \operatorname{Coprod}(H[a+1,b+ 1])
            \\&\subseteq \operatorname{Coprod}(H[a,b+1]).
        \end{aligned}
    \end{displaymath}
    Therefore, 
    \begin{displaymath}
        \begin{aligned}
            X 
            &\in \operatorname{Coprod}(H[a,b+1]) \star \operatorname{Coprod}(H[a,b])
            \\&\subseteq \operatorname{Coprod}(H[a,b+1]) \star \operatorname{Coprod}(H[a,b+1])
            \\&\subseteq \operatorname{Coprod}(H[a,b+1]).
        \end{aligned}
    \end{displaymath}
    This completes the proof.
\end{proof}

\begin{remark}
    \label{rmk:octahedral}
    We use the octahedral axiom in the following ways.
    Let $X\xrightarrow{f} Y\xrightarrow{g} Z$ be composable.
    \begin{enumerate}
        \item \label{rmk:octahedraloc} Choose distinguished triangles $X\xrightarrow{f} Y\to C_f\to X[1]$, $Y\xrightarrow{g} Z\xrightarrow{p_g} C_g\to Y[1]$ and $X\xrightarrow{gf} Z\xrightarrow{p_{gf}} C_{gf}\to X[1]$. There is a distinguished triangle $C_f\to C_{gf}\xrightarrow{h} C_g\to C_f[1]$ with $h\circ p_{gf} = p_g$. See e.g.\ \cite[Proposition 1.4.6]{Neeman:2001}.
        \item \label{rmk:octahedralof} Choose distinguished triangles $F_f\to X\xrightarrow{f} Y\to F_f[1]$, $F_g\to Y\xrightarrow{g} Z\to F_g[1]$ and $F_{gf}\to X\xrightarrow{gf} Z\to F_{gf}[1]$. There is a distinguished triangle $F_f\to F_{gf}\to F_g\to F_f[1]$. Indeed, take the distinguished triangle $C_f\to C_{gf}\xrightarrow{h} C_g\to C_f[1]$ and shift by $[-1]$. Here we use that $F_? \cong C_?[-1]$.
    \end{enumerate}
\end{remark}

\begin{lemma}
    \label{lem:aisle}
    Let $(\mathcal{T}^{\leq 0},\mathcal{T}^{\geq 0})$ be a $t$-structure on a triangulated category $\mathcal{T}$.
    Consider a distinguished triangle  
    \begin{displaymath}
        E\to F\to D\to E[1]
    \end{displaymath}
    where $F\in \mathcal{T}^{\leq 0}$ and $D\in \mathcal{T}^{\leq -m}$ for some $m\geq 1$.
    Then $E\in \mathcal{T}^{\leq 0}$.
\end{lemma}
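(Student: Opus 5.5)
To prove \Cref{lem:aisle}, I will test $E$ against the coaisle. Recall that for any $t$-structure the aisle is the left orthogonal of the shifted coaisle:
\begin{displaymath}
    \mathcal{T}^{\leq 0} = \{ X\in\mathcal{T} : \operatorname{Hom}(X,B)=0 \text{ for all } B\in \mathcal{T}^{\geq 1}\}.
\end{displaymath}
To see this, take $X$ with this vanishing property and its truncation triangle $\tau^{\leq 0}X\to X\to \tau^{\geq 1}X\to$. The second map is zero by hypothesis. Hence $\tau^{\geq 1}X$ is a direct summand of $(\tau^{\leq 0}X)[1]\in\mathcal{T}^{\leq -1}\subseteq\mathcal{T}^{\leq 0}$. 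A direct summand of an object of $\mathcal{T}^{\leq 0}$ that also lies in $\mathcal{T}^{\geq 1}$ admits a split monomorphism from itself, hence its identity factors through a zero Hom group, so it vanishes. Thus $X\cong\tau^{\leq 0}X\in\mathcal{T}^{\leq 0}$. Alternatively, one can simply cite that aisles are closed under direct summands \cite[Lemma 1.3]{AlonsoTarrio/Lopez/Salorio:2003}.

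So fix $B\in\mathcal{T}^{\geq 1}$ and apply $\operatorname{Hom}(-,B)$ to the rotated triangle $D[-1]\to E\to F\to D$. This gives an exact sequence
\begin{displaymath}
    \operatorname{Hom}(F,B)\to\operatorname{Hom}(E,B)\to\operatorname{Hom}(D[-1],B).
\end{displaymath}
The left term vanishes because $F\in\mathcal{T}^{\leq 0}$. For the right term, note that $D\in\mathcal{T}^{\leq -m}$ gives $D[-1]\in\mathcal{T}^{\leq 1-m}$. Since $m\geq 1$, we have $1-m\leq 0$, so $D[-1]\in\mathcal{T}^{\leq 0}$ and the right term also vanishes. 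Hence $\operatorname{Hom}(E,B)=0$ for all such $B$, and therefore $E\in\mathcal{T}^{\leq 0}$.

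Equivalently, the same argument shows $E\in \mathcal{T}^{\leq 0}\star\mathcal{T}^{\leq 0}$ read off from the rotated triangle, and aisles are closed under extensions. The only point requiring care, rather than a real obstacle, is the shift bookkeeping: it is exactly the hypothesis $m\geq 1$ that places $D[-1]$ in $\mathcal{T}^{\leq 0}$.
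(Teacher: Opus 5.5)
Your proof is correct and is essentially the paper's argument. Like the paper, you rotate backwards to $D[-1]\to E\to F\to D$, use $m\geq 1$ to get $D[-1]\in\mathcal{T}^{\leq 0}$, and conclude from closure of the aisle under extensions. Your detour through the characterization $\mathcal{T}^{\leq 0}=\{X : \operatorname{Hom}(X,\mathcal{T}^{\geq 1})=0\}$ only reproves that extension-closure from the $t$-structure axioms, whereas the paper simply takes it as known.
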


\begin{proof}
    By rotating backwards, there exists a distinguished triangle  
    \begin{displaymath}
       D[-1] \to E\to F\to D.
    \end{displaymath}
    Note that $D[-1]\in \mathcal{T}^{\leq 1-m} \subseteq \mathcal{T}^{\leq 0}$.
    Since aisles are closed under extensions, $E\in \mathcal{T}^{\leq 0}$.
\end{proof}

\begin{lemma}
    \label{lem:iterate}
    Let $(H,\mathcal{T}^{\leq 0},A)$ be weak approximation data for $\mathcal{T}$.
    For every $F\in \mathcal{T}^{\leq 0}$ and every $m\geq 1$, there exists a distinguished triangle 
    \begin{displaymath}
        E_m\to F\to D_m\to E_m[1]
    \end{displaymath}
    with $E_m\in \overline{\langle H\rangle}^{[1-m-A,A]} \cap \mathcal{T}^{\leq 0}$ and $D_m\in \mathcal{T}^{\leq -m}$. 
\end{lemma}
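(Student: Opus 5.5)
We argue by induction on $m$, with \Cref{def:weak_approx}\eqref{def:weak_approx2} as the only input and the octahedral axiom in the form \Cref{rmk:octahedral}\eqref{rmk:octahedralof} used to assemble the successive approximations.

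For $m=1$, apply \Cref{def:weak_approx}\eqref{def:weak_approx2} to $F$. This gives a distinguished triangle $E_1\to F\to D_1\to E_1[1]$ with $E_1\in\overline{\langle H\rangle}^{[-A,A]}$ and $D_1\in\mathcal{T}^{\leq -1}$. By \Cref{lem:aisle}, $E_1\in\mathcal{T}^{\leq 0}$. Since $[-A,A]=[1-1-A,A]$, the case $m=1$ holds.

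Suppose now the triangle $E_m\to F\xrightarrow{f} D_m\to E_m[1]$ has been constructed. Since $D_m\in\mathcal{T}^{\leq -m}$, we have $D_m[-m]\in\mathcal{T}^{\leq 0}$. Apply \Cref{def:weak_approx}\eqref{def:weak_approx2} to $D_m[-m]$ and shift the resulting triangle by $[m]$. This yields $E'\to D_m\xrightarrow{g} D_{m+1}\to E'[1]$ with $D_{m+1}\in\mathcal{T}^{\leq -m-1}$ and, by \Cref{lem:closures}\eqref{lem:closures2},
\[
E'\in\overline{\langle H\rangle}^{[-A,A]}[m]=\overline{\langle H\rangle}^{[-A-m,A-m]}.
\]
Set $E_{m+1}$ to be a cocone of $gf\colon F\to D_{m+1}$. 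Then \Cref{rmk:octahedral}\eqref{rmk:octahedralof}, applied to the composable pair $F\xrightarrow{f}D_m\xrightarrow{g}D_{m+1}$, gives a distinguished triangle $E_m\to E_{m+1}\to E'\to E_m[1]$.

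By induction, $E_m\in\overline{\langle H\rangle}^{[1-m-A,A]}$. By \Cref{lem:closures}\eqref{lem:closures2}, both $E_m$ and $E'$ lie in $\overline{\langle H\rangle}^{[-m-A,A]}$, since $-m-A\leq 1-m-A$ and $A-m\leq A$. This category is closed under extensions by \Cref{lem:closures}\eqref{lem:closures1}, so $E_{m+1}\in\overline{\langle H\rangle}^{[1-(m+1)-A,A]}$. Finally, the triangle $E_{m+1}\to F\to D_{m+1}\to E_{m+1}[1]$ has $F\in\mathcal{T}^{\leq 0}$ and $D_{m+1}\in\mathcal{T}^{\leq -(m+1)}$, so \Cref{lem:aisle} gives $E_{m+1}\in\mathcal{T}^{\leq 0}$. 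This completes the induction.

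The only delicate point is matching the octahedral triangle with the correct cocones and keeping the shift indices consistent. The translation $E'\in\overline{\langle H\rangle}^{[-A-m,A-m]}$ depends on the sign convention $H[a,b]=\{H[-k]\}$. No sign issues arise from shifting the triangle by $[m]$, since only membership in subcategories matters.
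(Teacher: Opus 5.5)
Your proof is correct and takes the same route as the paper's: induction on $m$, applying \Cref{def:weak_approx}\eqref{def:weak_approx2} to $D_m[-m]$ and shifting by $[m]$, assembling the triangles via \Cref{rmk:octahedral}\eqref{rmk:octahedralof}, and then using \Cref{lem:closures} for the range and \Cref{lem:aisle} for membership in $\mathcal{T}^{\leq 0}$. The only cosmetic difference is that the paper invokes \Cref{lem:aisle} once at the outset, whereas you apply it at each step.
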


\begin{proof}
    This follows from \cite[Corollary 3.2(1)]{Neeman:2026} with weak approximation data for $\mathcal{T}$.
    To be self-contained, we give a proof.
    By \Cref{lem:aisle}, for any desired distinguished triangle 
    \begin{displaymath}
        E_m\to F\to D_m\to E_m[1],
    \end{displaymath}
    it follows that $E_m\in \mathcal{T}^{\leq 0}$.
    Hence, it suffices to produce the desired distinguished triangle.
    We argue by induction on $m$. 
    The base case $m=1$ is \Cref{def:weak_approx}\eqref{def:weak_approx2}.

    Assume that the desired distinguished triangle exists for $m$, i.e.\ a distinguished triangle 
    \begin{displaymath}
        E_m\to F\xrightarrow{f} D_m\to E_m[1]
    \end{displaymath}
    with $E_m\in \overline{\langle H\rangle}^{[1-m-A,A]} \cap \mathcal{T}^{\leq 0}$ and $D_m\in \mathcal{T}^{\leq -m}$. 
    Since $D_m[-m]\in \mathcal{T}^{\leq 0}$, \Cref{def:weak_approx}\eqref{def:weak_approx2} yields a distinguished triangle 
    \begin{displaymath}
        E^\prime \to D_m[-m]\to D^\prime \to E^\prime[1]
    \end{displaymath}
    where $E^\prime \in \overline{\langle H\rangle}^{[-A,A]}$ and $D^\prime \in \mathcal{T}^{\leq -1}$.
    After shifting by $[m]$, we have a distinguished triangle 
    \begin{displaymath}
        E^\prime[m]\to D_m\xrightarrow{g} D^\prime[m]\to E^\prime[m+1]
    \end{displaymath}
    where $E^\prime[m]\in \overline{\langle H\rangle}^{[-A-m,A-m]}$ and $D^\prime[m]\in \mathcal{T}^{\leq -1-m}$.
    Set $D_{m+1} := D^\prime[m]$. 
    Complete the composite $g\circ f \colon F\to D^\prime[m]$ to a distinguished triangle,
    \begin{displaymath}
        F\to D^\prime[m] \to R \to F[1].
    \end{displaymath}
    Set $E_{m+1}:=R[-1]$. 
    The cocones of $f$, $g$, and $g\circ f$ are respectively $E_m$, $E^\prime[m]$, and $E_{m+1}$. 
    By \Cref{rmk:octahedral}\eqref{rmk:octahedralof}, this yields a distinguished triangle 
    \begin{displaymath}
        E_m \to E_{m+1} \to E^\prime [m] \to E_m [1].
    \end{displaymath}
    By \Cref{lem:closures}\eqref{lem:closures1}\eqref{lem:closures2},
    \begin{displaymath}
        \begin{aligned}
            E_{m+1} 
            & \in \overline{\langle H\rangle}^{[1-m-A,A]} \star \overline{\langle H\rangle}^{[-m-A,A-m]}
            \\& \subseteq \overline{\langle  H\rangle}^{[-m-A,A]} 
            \\&= \overline{\langle H\rangle}^{[1-(m+1)-A,A]}.
        \end{aligned}
    \end{displaymath}
    This completes the proof.
\end{proof}

\begin{lemma}
    \label{lem:hom_vanishing}
    Let $\mathcal{T}$ be a triangulated category admitting small coproducts. 
    Choose $H\in \mathcal{T}^c$.
    Denote by $\mathcal{T}^{\leq 0}$ the aisle compactly generated by $H$.
    Assume that $\operatorname{Hom}(H,H[n])=0$ for all $n\geq n_0$.
    Then $\operatorname{Hom}(H,\mathcal{T}^{\leq -n_0})=0$.
\end{lemma}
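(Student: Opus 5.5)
The plan is to describe $\mathcal{T}^{\leq -n_0}$ as a closure of shifts of $H$ and then apply \Cref{lem:closures}\eqref{lem:closures4}.

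First, identify the aisle. Since $H$ is compact, the subcategory $\{H[n] : n\geq 0\}$ of $\mathcal{T}^c$ is closed under positive shifts. By the result of Canonaco--Haesemeyer--Neeman--Stellari recalled in \Cref{sec:prelim_t-structures_t_structures}, this gives
\begin{displaymath}
    \mathcal{T}^{\leq 0}=\overline{\langle H\rangle}^{(-\infty,0]}=\operatorname{Coprod}(\{H[n]:n\geq 0\}).
\end{displaymath}
Shifting by $[n_0]$, which preserves coproducts and extensions, yields
\begin{displaymath}
    \mathcal{T}^{\leq -n_0}=\mathcal{T}^{\leq 0}[n_0]=\operatorname{Coprod}(\{H[n]:n\geq n_0\}).
\end{displaymath}

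Next, consider the full subcategory $\mathcal{Y}:=\{Y\in\mathcal{T} : \operatorname{Hom}(H,Y)=0\}$. Since $H$ is compact, the second part of \Cref{lem:closures}\eqref{lem:closures4} shows that $\mathcal{Y}$ is closed under small coproducts, extensions and direct summands. It is also strictly full. By hypothesis, $H[n]\in\mathcal{Y}$ for all $n\geq n_0$. Since $\operatorname{Coprod}(\{H[n]:n\geq n_0\})$ is the smallest strictly full subcategory containing these objects and closed under coproducts and extensions, it is contained in $\mathcal{Y}$. Therefore $\operatorname{Hom}(H,\mathcal{T}^{\leq -n_0})=0$.

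The only step needing care is the identification of the aisle with $\operatorname{Coprod}$ of the nonnegative shifts, which the paper already records via the cited theorem. If one prefers to avoid it, there is an alternative. Here $\overline{\langle H\rangle}^{(-\infty,-n_0]}$ is by definition the smallest cocomplete preaisle containing $H[n_0]$. The subcategory $\{Y : \operatorname{Hom}(H[m],Y)=0 \text{ for all } m\leq 0\}$ is a cocomplete preaisle, being closed under positive shifts, extensions and coproducts by compactness. It contains $H[n_0]$, because $\operatorname{Hom}(H[m],H[n_0])=\operatorname{Hom}(H,H[n_0-m])=0$ whenever $n_0-m\geq n_0$. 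Hence it contains $\mathcal{T}^{\leq -n_0}$, and taking $m=0$ gives the claim directly.
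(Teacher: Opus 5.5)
Your main argument is exactly the paper's proof: you identify $\mathcal{T}^{\leq -n_0}=\mathcal{T}^{\leq 0}[n_0]$ with $\operatorname{Coprod}(\{H[n]:n\geq n_0\})$, and you note that $\{Y:\operatorname{Hom}(H,Y)=0\}$ contains these objects and is closed under coproducts and extensions by \Cref{lem:closures}\eqref{lem:closures4}. Your alternative, which uses that $\{Y:\operatorname{Hom}(H[m],Y)=0 \text{ for all } m\leq 0\}$ is a cocomplete preaisle containing $H[n_0]$, is also correct, and it has the minor advantage of relying only on the definition of the aisle as the smallest cocomplete preaisle containing $H$, so it bypasses the Canonaco--Haesemeyer--Neeman--Stellari identification.
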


\begin{proof}
    This is \cite[Remark 1.24]{Neeman:2026}; cf.\ \cite[Lemma 3.5(1)]{Burke/Neeman/Pauwels:2023}.
    On one hand, 
    \begin{displaymath}
        \begin{aligned}
            \mathcal{T}^{\leq -n_0} 
            &= \mathcal{T}^{\leq 0}[n_0] 
            \\&= \operatorname{Coprod}(\{H[n] : n\geq n_0\}).
        \end{aligned}
    \end{displaymath}
    On the other hand, the strictly full subcategory 
    \begin{displaymath}
        \{Y \in \mathcal{T} : \operatorname{Hom}(H,Y)=0\}
    \end{displaymath}
    contains $H[n]$ for $n\geq n_0$. 
    Moreover, by \Cref{lem:closures}\eqref{lem:closures4}, this subcategory above is closed under small coproducts and extensions.
    Consequently, the subcategory above contains $\mathcal{T}^{\leq -n_0}$.
    Thus, the desired claim follows.
\end{proof}

\section{Lifts to aisles}
\label{sec:lifts}

\begin{setup}
    \label{setup:recollement}
    Consider a recollement of singly compactly generated triangulated categories: 
    \begin{displaymath}
        \begin{tikzcd}[column sep=6em]
            {\mathcal{D}_Z} & {\mathcal{D}} & {\mathcal{D}_U.}
            \arrow["{i_\ast}" description, from=1-1, to=1-2]
            \arrow["{i^\ast}"', bend right=35, from=1-2, to=1-1]
            \arrow["{i^!}", bend left=35, from=1-2, to=1-1]
            \arrow["{j^\ast}" description, from=1-2, to=1-3]
            \arrow["{j_!}"', bend right=35, from=1-3, to=1-2]
            \arrow["{j_\ast}", bend left=35, from=1-3, to=1-2]
        \end{tikzcd}
    \end{displaymath}
    For every $E\in \mathcal{D}$, there are distinguished triangles
    \begin{equation}
        \label{eq:gluing_triangles}
        \begin{aligned}
            j_! j^\ast E 
            & \xrightarrow{\epsilon_E} E 
            \xrightarrow{\eta_E} i_\ast i^\ast E 
            \xrightarrow{\varphi_E} j_! j^\ast E[1],
            \\& i_\ast i^! E \to E \to j_\ast j^\ast E \to i_\ast i^! E[1].
        \end{aligned}
    \end{equation}
    Here $\epsilon$ is the counit of $j_! \dashv j^\ast$ and $\eta$ is the unit of $i^\ast \dashv i_\ast$.
\end{setup}

\begin{lemma}
    \label{lem:glued}
    Assume \Cref{setup:recollement}.
    Consider $t$-structures $(\mathcal{D}_Z^{\leq 0},\mathcal{D}_Z^{\geq 0})$ and $(\mathcal{D}_U^{\leq 0},\mathcal{D}_U^{\geq 0})$ respectively on $\mathcal{D}_Z$ and $\mathcal{D}_U$. 
    Denote by $(\mathcal{D}^{\leq 0},\mathcal{D}^{\geq 0})$ the glued $t$-structure.
    For every $n\in \mathbb{Z}$,
    \begin{displaymath}
        \mathcal{D}^{\leq n} = \{E\in \mathcal{D} :\ i^\ast E\in \mathcal{D}_Z^{\leq n}\ \text{and}\ j^\ast E\in \mathcal{D}_U^{\leq n}\}.
    \end{displaymath}
    In particular,
    \begin{displaymath}
        i^\ast \mathcal{D}^{\leq n}\subseteq \mathcal{D}_Z^{\leq n},
        \qquad 
        j^\ast \mathcal{D}^{\leq n}\subseteq \mathcal{D}_U^{\leq n},
        \qquad 
        i_\ast \mathcal{D}_Z^{\leq n}\subseteq \mathcal{D}^{\leq n},
        \qquad 
        j_! \mathcal{D}_U^{\leq n}\subseteq \mathcal{D}^{\leq n}.
    \end{displaymath}
\end{lemma}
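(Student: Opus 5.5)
The plan is to reduce everything to the definition of the glued aisle and the fact that shifts commute with the exact functors $i^\ast$ and $j^\ast$. By definition, $\mathcal{D}^{\leq n} := \mathcal{D}^{\leq 0}[-n]$. So $E\in \mathcal{D}^{\leq n}$ if and only if $E[n]\in \mathcal{D}^{\leq 0}$. By the definition of the glued $t$-structure, this holds if and only if $i^\ast(E[n])\in \mathcal{D}_Z^{\leq 0}$ and $j^\ast(E[n])\in \mathcal{D}_U^{\leq 0}$.

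Since $i^\ast$ and $j^\ast$ are exact, there are natural isomorphisms $i^\ast(E[n])\cong (i^\ast E)[n]$ and $j^\ast(E[n])\cong (j^\ast E)[n]$. The aisles $\mathcal{D}_Z^{\leq 0}$ and $\mathcal{D}_U^{\leq 0}$ are strictly full, so membership is invariant under isomorphism. Hence the two conditions are equivalent to $i^\ast E\in \mathcal{D}_Z^{\leq 0}[-n]=\mathcal{D}_Z^{\leq n}$ and $j^\ast E\in \mathcal{D}_U^{\leq n}$. This gives the displayed description of $\mathcal{D}^{\leq n}$.

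The first two inclusions are immediate from this description. For the third, take $T\in \mathcal{D}_Z^{\leq n}$. Then:
\begin{itemize}
    \item $j^\ast i_\ast T\cong 0$, which lies in every aisle (aisles are strictly full and contain $0$);
    \item $i^\ast i_\ast T\cong T$, because $i_\ast$ is fully faithful, so the counit of $i^\ast\dashv i_\ast$ is an isomorphism.
\end{itemize}
Thus $i_\ast T\in \mathcal{D}^{\leq n}$.

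For the fourth, take $B\in \mathcal{D}_U^{\leq n}$. Then:
\begin{itemize}
    \item $i^\ast j_! B\cong 0$, as recalled in \Cref{sec:prelim_recollements};
    \item $j^\ast j_! B\cong B$, because $j_!$ is fully faithful, so the unit of $j_!\dashv j^\ast$ is an isomorphism.
\end{itemize}
Thus $j_! B\in \mathcal{D}^{\leq n}$.

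There is no real obstacle here. The only points needing care are the strict fullness of the aisles, which allows passing along the isomorphisms above, and the standard fact that full faithfulness of an adjoint makes the relevant (co)unit invertible.
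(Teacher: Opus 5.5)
Your proof is correct. The shifted description of $\mathcal{D}^{\leq n}$ is obtained exactly as in the paper, by commuting $i^\ast$ and $j^\ast$ with $[n]$.

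For the four containments, the paper's route differs slightly from yours. It invokes the general BBD facts that $i^\ast, j_!$ are right $t$-exact and $i_\ast, j^\ast$ are $t$-exact for the glued $t$-structure, and then shifts. You instead verify the inclusions for $i_\ast$ and $j_!$ directly from the level-$n$ description, using
\begin{displaymath}
    i^\ast i_\ast\cong 1,\qquad j^\ast i_\ast\cong 0,\qquad i^\ast j_!\cong 0,\qquad j^\ast j_!\cong 1.
\end{displaymath}
These identities come from the full faithfulness of $i_\ast$ and $j_!$, and from the vanishing recalled in the preliminaries. This in effect reproves the relevant part of BBD 1.4.10. It makes the lemma self-contained and handles every $n$ at once, so there is no need to shift a level-$0$ exactness statement.
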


\begin{proof}
    If $n=0$, then this is the definition of the glued aisle.
    Since $i^\ast$ and $j^\ast$ commute with shifts, 
    \begin{displaymath}
        \mathcal{D}^{\leq n} = \{E\in \mathcal{D} :\ i^\ast E\in \mathcal{D}_Z^{\leq n}\ \text{and}\ j^\ast E\in \mathcal{D}_U^{\leq n}\}.
    \end{displaymath}
    Moreover, the functors $i^\ast, j^\ast, i_\ast, j_!$ are right $t$-exact for the $n=0$ case.
    After shifting, the desired containments follow.
\end{proof}

\begin{lemma}
    \label{lem:normalize}
    Assume \Cref{setup:recollement} where $\mathcal{D}_U$ is weakly approximable with compact generator $G_U$.
    Suppose $G^\prime$ is a compact generator for $\mathcal{D}$ which satisfies $\operatorname{Hom}(G^\prime,G^\prime[n])=0$ for $n\gg 0$.
    Denote by $\mathcal{D}_U^{\leq 0}$ the aisle compactly generated by $G_U$.
    Then the following hold:
    \begin{enumerate}
        \item \label{lem:normalize1} there exists an $m\geq 0$ such that $j^\ast G^\prime \in \mathcal{D}_U^{\leq m}$
        \item \label{lem:normalize2} $G := G^\prime[m]\oplus j_! G_U$ is a compact generator of $\mathcal{D}$, and $i^\ast G$ is a compact generator of $\mathcal{D}_Z$
        \item \label{lem:normalize3} the $t$-structure on $\mathcal{D}$ compactly generated by $G$ is the gluing of the $t$-structure on $\mathcal{D}_Z$ compactly generated by $i^\ast G$ with $(\mathcal{D}_U^{\leq 0},\mathcal{D}_U^{\geq 0})$
        \item \label{lem:normalize4} $\operatorname{Hom}(G,G[n])=0$ for $n\gg 0$.
    \end{enumerate}
\end{lemma}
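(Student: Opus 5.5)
For \eqref{lem:normalize1}, note that $j^\ast$ preserves compact objects: its right adjoint $j_\ast$ preserves coproducts, because $j_\ast$ has the coproduct-preserving... Rather than rely on that, I would argue directly. The functor $j^\ast$ has a right adjoint and $j_!$ preserves compacts. Also $j^\ast j_! \cong 1$, and $j^\ast$ is essentially surjective onto compacts up to summands, since $\mathcal{D}_U^c$ is generated by $G_U = j^\ast j_! G_U$. The cleanest route is to use the first triangle of \eqref{eq:gluing_triangles}. The object $j^\ast G'$ is compact in $\mathcal{D}_U$ because $j_\ast$ preserves coproducts: $j_\ast \cong$ the composite of $1$, $i_\ast i^!$ and cones, and $i^!$ preserves coproducts since $i_\ast$ preserves compacts, as $i_\ast i^\ast$ of a compact is the cone of compacts. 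Hence $j^\ast G'\in \mathcal{D}_U^c=\operatorname{thick}(G_U)$. Any object of the thick closure of $G_U$ lies in $\overline{\langle G_U\rangle}^{[-m,m]}$ for some $m$, and so lies in $\mathcal{D}_U^{\leq m}$, because $G_U\in\mathcal{D}_U^{\leq 0}$.

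For \eqref{lem:normalize2}, both summands are compact, $j_!$ preserving compacts. If $\operatorname{Hom}(G[n],E)=0$ for all $n$, then already $\operatorname{Hom}(G'[n],E)=0$ for all $n$, so $E=0$. For $i^\ast G$, use adjunction: $\operatorname{Hom}(i^\ast G[n],T)=\operatorname{Hom}(G[n],i_\ast T)$, and this vanishes for all $n$ only if $i_\ast T=0$, hence $T=0$ by full faithfulness.

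For \eqref{lem:normalize3}, let $\mathcal{A}=\overline{\langle G\rangle}^{(-\infty,0]}$ and let $\mathcal{B}$ be the glued aisle. To show $\mathcal{A}\subseteq\mathcal{B}$, it suffices that $G\in\mathcal{B}$, since $\mathcal{B}$ is a cocomplete preaisle. We have $i^\ast G\in \mathcal{D}_Z^{\leq 0}$ by definition. Moreover $j^\ast G=j^\ast G'[m]\oplus G_U\in \mathcal{D}_U^{\leq 0}$ by \eqref{lem:normalize1}.

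The converse inclusion $\mathcal{B}\subseteq\mathcal{A}$ is the main point. Take $E\in\mathcal{B}$ and use the triangle $j_!j^\ast E\to E\to i_\ast i^\ast E$. First, $j^\ast E\in\operatorname{Coprod}(G_U[n],n\ge0)$, so $j_!j^\ast E\in\operatorname{Coprod}(j_!G_U[n])\subseteq\mathcal{A}$ because $j_!$ preserves coproducts and extensions. Second, $i^\ast E\in\operatorname{Coprod}(i^\ast G[n],n\geq 0)$, so it suffices to show $i_\ast i^\ast G\in\mathcal{A}$. The triangle $j_!j^\ast G\to G\to i_\ast i^\ast G$ shows this, since $i_\ast i^\ast G$ is an extension of $G$ and $j_!j^\ast G[1]$, and $j_!j^\ast G[1]\in\mathcal{A}$ by the first step applied to $j^\ast G\in\mathcal{D}_U^{\leq 0}$. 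Closure under extensions then gives $E\in\mathcal{A}$.

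For \eqref{lem:normalize4}, expand $\operatorname{Hom}(G,G[n])$ into four terms.
\begin{itemize}
\item The term $\operatorname{Hom}(G',G'[n])$ vanishes for $n\gg0$ by hypothesis.
\item The term $\operatorname{Hom}(j_!G_U,j_!G_U[n])=\operatorname{Hom}(G_U,G_U[n])$ vanishes for $n$ large, since $G_U[n]\in\mathcal{D}_U^{\leq -n}$ and $\operatorname{Hom}(G_U,\mathcal{D}_U^{\leq -A})=0$ because the aisle is in the preferred class.
\item The term $\operatorname{Hom}(j_!G_U,G'[n])=\operatorname{Hom}(G_U,j^\ast G'[n])$ vanishes likewise, using \eqref{lem:normalize1}.
\item The term $\operatorname{Hom}(G',j_!G_U[n])$ vanishes because $j_!G_U\in\operatorname{thick}(G')$ lies in $\overline{\langle G'\rangle}^{[-k,k]}$, so \Cref{lem:closures}\eqref{lem:closures4} reduces it to $\operatorname{Hom}(G',G'[n'])$ with $n'\ge n-k$.
\end{itemize}
The main obstacle I expect is making the compactness of $j^\ast G'$ rigorous in \eqref{lem:normalize1}.
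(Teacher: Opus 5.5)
There is a genuine gap in \eqref{lem:normalize1}. The claim that $j^\ast G^\prime$ is compact is false in general, and your justification for it is circular. For a recollement of compactly generated categories, the four statements ``$j^\ast$ preserves compacts'', ``$j_\ast$ preserves coproducts'', ``$i^!$ preserves coproducts'' and ``$i_\ast$ preserves compacts'' are equivalent to one another, but none of them holds automatically. Your last step (``$i_\ast i^\ast$ of a compact is a cone of compacts'') needs $j_!j^\ast P$ to be compact for compact $P$. That amounts to the compactness of $j^\ast P$, which is what you are trying to prove.

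The claim fails in the paper's own application. Take the recollement of \Cref{prop:A8} for $\mathbb{G}_m\subseteq X=\mathbb{A}^1_k$ and $S=|X|$. There $\mathcal{D}_U=D_{\operatorname{qc},\{0\}}(X)$, $j_!$ is the inclusion, and $j^\ast=\mathbf{R}\Gamma_{\{0\}}$ (the functor denoted $i^!$ there). Then $j^\ast\mathcal{O}_X\cong(k[x,x^{-1}]/k[x])[-1]$ is not perfect, hence not compact, even though $\mathcal{D}_Z$ and $\mathcal{D}_U$ are weakly approximable.

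A warning sign is that your argument for \eqref{lem:normalize1} never uses the hypothesis $\operatorname{Hom}(G^\prime,G^\prime[n])=0$ for $n\gg 0$, nor the weak approximability of $\mathcal{D}_U$. Without the Hom-vanishing hypothesis, \eqref{lem:normalize1} can actually fail. Let $\Lambda$ be the upper triangular dg algebra with diagonal entries $k$ and corner $M=\bigoplus_{n\geq 0}k[-n]$, and work with right modules. The compact object $e_2\Lambda$ gives a recollement with $\mathcal{D}_U\simeq D(k)$ and $j^\ast\Lambda=\Lambda e_2\cong M\oplus k$. This has cohomology in every degree $n\geq 0$, so it lies in no $\mathcal{D}_U^{\leq m}$.

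To repair \eqref{lem:normalize1}, argue as the paper does: run your fourth-bullet argument from \eqref{lem:normalize4} with the variables swapped. Since $j_!G_U\in\overline{\langle G^\prime\rangle}^{[-k,k]}$, the first half of \Cref{lem:closures}\eqref{lem:closures4} together with the hypothesis gives $\operatorname{Hom}(j_!G_U,G^\prime[n])=0$ for $n\gg 0$. By adjunction, $\operatorname{Hom}(G_U[-n],j^\ast G^\prime)=0$ for $n\gg 0$.

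Because $j^\ast G^\prime$ may be non-compact, you then need a genuine input from weak approximability. Since $\mathcal{D}_U$ is weakly approximable and $\mathcal{D}_U^{\leq 0}$ is in the preferred equivalence class, any $F\in\mathcal{D}_U$ with $\operatorname{Hom}(G_U[-n],F)=0$ for $n\gg 0$ lies in $\mathcal{D}_U^-$ \cite[Corollary 3.10(1)]{Burke/Neeman/Pauwels:2023}. This yields $m$.

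The rest of your proposal is sound once \eqref{lem:normalize1} is repaired. \eqref{lem:normalize3} and the third term in \eqref{lem:normalize4} depend only on \eqref{lem:normalize1}. \eqref{lem:normalize2} is fine once you note that $i^\ast G$ is compact because $i^\ast$ preserves compacts. Your direct verification of \eqref{lem:normalize3} is a self-contained substitute for \cite[Lemma A.1]{GuisadoVillaalgordo/Lank:2026}, which the paper cites instead.
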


\begin{proof}
    \eqref{lem:normalize1} and \eqref{lem:normalize3} are essentially from \cite[Lemma 3.11]{Burke/Neeman/Pauwels:2023}.
    The observation below is \cite[Lemma 3.5(2)]{Burke/Neeman/Pauwels:2023} (see also \cite[Lemma A.1]{GuisadoVillaalgordo/Lank:2026}). 
    We recall the argument for convenience.

    We prove \eqref{lem:normalize1}.
    Recall that $j_!$ preserves compact objects.
    Hence, $j_! G_U\in \mathcal{D}^c$.
    We observe that for all $F,F^\prime\in \mathcal{D}^c$, there exists an integer $N$ such that $\operatorname{Hom}(F,F^\prime[n])=0$ for all $n\geq N$.
    First, consider the full subcategory $\mathcal{H}$ of $F\in \mathcal{D}^c$ where such an $N$ exists with $F^\prime = G^\prime$.
    This subcategory is thick (e.g.\ use the long exact sequences of $\operatorname{Hom}(-,G^\prime[n])$). 
    By hypothesis, $G^\prime \in \mathcal{H}$.
    As $G^\prime$ is a classical generator of $\mathcal{D}^c$ (see \Cref{sec:prelim_generation}), $\mathcal{H} = \mathcal{D}^c$.
    Second, fix $F\in \mathcal{D}^c$. 
    Consider the full subcategory $\mathcal{H}^\prime$ of $F^\prime\in \mathcal{D}^c$ for which such an $N$ exists.
    Again, $\mathcal{H}^\prime$ is thick, which can be shown using the long exact sequences of $\operatorname{Hom}(F,-)$.
    Also, by the first step above, $G^\prime \in \mathcal{H}^\prime$. 
    Hence, $\mathcal{H}^\prime = \mathcal{D}^c$, which proves the observation. 
    Applying the observation to $F = j_! G_U$ and $F^\prime = G^\prime$, and using adjunction, we have for $n\gg 0$,
    \begin{displaymath}
        \operatorname{Hom}(G_U[-n],j^\ast G^\prime)\cong \operatorname{Hom}(j_! G_U[-n],G^\prime) = 0.
    \end{displaymath}
    Since $\mathcal{D}_U$ is weakly approximable and $\mathcal{D}_U^{\leq 0}$ is in the preferred equivalence class, \cite[Corollary 3.10(1)]{Burke/Neeman/Pauwels:2023} implies that $j^\ast G^\prime\in \mathcal{D}_U^{\leq m}$ for some $m\geq 0$.

    Note that $j_! G_U$ is compact. 
    Hence, the hypotheses of \cite[Lemma A.1]{GuisadoVillaalgordo/Lank:2026} hold: $G^\prime$ and $G_U$ are respectively compact generators of $\mathcal{D}$ and $\mathcal{D}_U$, $\mathcal{D}_U^{\leq 0}$ is the aisle compactly generated by $G_U$, and $j^\ast G^\prime\in \mathcal{D}_U^{\leq m}$ by \eqref{lem:normalize1}.
    Consequently, \eqref{lem:normalize2} and \eqref{lem:normalize3} follow from \cite[Lemma A.1]{GuisadoVillaalgordo/Lank:2026}.

    To prove \eqref{lem:normalize4}, apply the observation with $F=F^\prime=G$.
\end{proof}

\begin{setup}
    \label{setup:standing}
    Consider \Cref{setup:recollement}.
    Assume that $\mathcal{D}_Z$ and $\mathcal{D}_U$ are weakly approximable.
    Suppose $\mathcal{D}$ has a compact generator $G^\prime$ satisfying $\operatorname{Hom}(G^\prime,G^\prime[n])=0$ for $n\gg 0$.
    Fix a compact generator $G_U$ of $\mathcal{D}_U$.
    Choose $m$ and $G = G^\prime[m]\oplus j_! G_U$ from \Cref{lem:normalize}.
    Consider the following $t$-structures:
    \begin{itemize}
        \item $\mathcal{D}_U^{\leq 0}$ is compactly generated by $G_U$;
        \item $\mathcal{D}_Z^{\leq 0}$ is compactly generated by $i^\ast G$;
        \item $\mathcal{D}^{\leq 0}$ is the glued $t$-structure (by \Cref{lem:normalize}\eqref{lem:normalize3}, it is the $t$-structure compactly generated by $G$; hence, $G\in \mathcal{D}^{\leq 0}$).
    \end{itemize}
    Note that $\mathcal{D}_U^{\leq 0}$ and $\mathcal{D}_Z^{\leq 0}$ are respectively in the preferred equivalence classes of $\mathcal{D}_U$ and $\mathcal{D}_Z$. 
    By \cite[Lemma 3.7]{Burke/Neeman/Pauwels:2023} (equivalently, \cite[Proposition 3.6]{Neeman:2026}), \Cref{lem:hom_vanishing}, and \Cref{lem:normalize}\eqref{lem:normalize4}, there exist $A_1,A_2,A_3 \gg 0$ such that the following hold:
    \begin{enumerate}
        \item \label{setup:standinga1} $(i^\ast G,\mathcal{D}_Z^{\leq 0},A_1)$ is weak approximation data for $\mathcal{D}_Z$
        \item \label{setup:standinga2} $(G_U,\mathcal{D}_U^{\leq 0},A_2)$ is weak approximation data for $\mathcal{D}_U$
        \item \label{setup:standinga3} $\operatorname{Hom}(G,\mathcal{D}^{\leq -A_3})=0$.
    \end{enumerate}
    Set $A= \max\{A_1,A_2,A_3\}$. 
    Define $K := A+1$, $B := 3A$, and $\mathcal{P} := \overline{\langle G\rangle}^{[-B,B]} \cap \mathcal{D}^{\leq K}$.
    By \Cref{lem:closures}\eqref{lem:closures1}, and the fact that aisles are closed under extensions and small coproducts, $\mathcal{P}$ is closed under small coproducts and extensions.
    Since $j_! G_U$ is a direct summand of $G$, it follows that $j_! G_U\in \overline{\langle G\rangle}^{[0,0]}$.
\end{setup}

\begin{definition}
    \label{def:lift}
    Assume \Cref{setup:standing}. 
    Let $E^\prime \in \mathcal{D}_Z$, $Y\in \mathcal{D}$, and $f^\prime \colon E^\prime \to i^\ast Y$.
    A \textbf{$\mathcal{P}$-lift} of $f^\prime$ is a pair $(q,\theta)$ consisting of a morphism $q\colon E\to Y$ with $E\in \mathcal{P}$ and an isomorphism $\theta\colon i^\ast E \to E^\prime$ such that $i^\ast q = f^\prime \circ \theta$.
\end{definition}

\begin{lemma}
    \label{lem:onecell}
    Assume \Cref{setup:standing}. 
    Let $Y\in \mathcal{D}^{\leq K}$, $-A-1\leq n\leq A$, and $f^\prime \colon (i^\ast G)[n]\to i^\ast Y$.
    Then $f^\prime$ admits a $\mathcal{P}$-lift.
\end{lemma}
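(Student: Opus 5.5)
The plan is to start from $G[n]$ itself and correct it by an object coming from $\mathcal{D}_U$ via $j_!$, so that the correction is invisible to $i^\ast$ but kills the obstruction to lifting. By adjunction $i^\ast\dashv i_\ast$, the morphism $f^\prime$ corresponds to a morphism $\tilde f\colon G[n]\to i_\ast i^\ast Y$, using that $i^\ast(G[n])=(i^\ast G)[n]$. Consider the first gluing triangle \eqref{eq:gluing_triangles} for $Y$:
\[
j_!j^\ast Y\to Y\xrightarrow{\eta_Y} i_\ast i^\ast Y\xrightarrow{\varphi_Y} j_!j^\ast Y[1].
\]
The obstruction to lifting $\tilde f$ through $\eta_Y$ is the composite $g:=\varphi_Y\circ\tilde f\colon G[n]\to j_!j^\ast Y[1]$. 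I will not try to make $g$ vanish. Instead, I will factor it through $j_!$ of a controlled approximation of $j^\ast Y[1]$.

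\textbf{Approximating $j^\ast Y[1]$.} By \Cref{lem:glued}, $j^\ast Y\in\mathcal{D}_U^{\leq K}$, so $F:=j^\ast Y[1+A]\in\mathcal{D}_U^{\leq 0}$. Apply \Cref{lem:iterate} to $F$ with the data of \Cref{setup:standing}\eqref{setup:standinga2} and $m=3A$. Shifting back by $[-A]$ and using \Cref{lem:closures}\eqref{lem:closures2} gives a triangle
\[
E_U\to j^\ast Y[1]\to D_U\to E_U[1]
\]
with $E_U\in\overline{\langle G_U\rangle}^{[1-3A,2A]}\cap\mathcal{D}_U^{\leq A}$ and $D_U\in\mathcal{D}_U^{\leq -2A}$. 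Now apply $j_!$. By \Cref{lem:glued}, $j_!D_U\in\mathcal{D}^{\leq -2A}$, and therefore $j_!D_U[-n]\in\mathcal{D}^{\leq -2A+n}\subseteq\mathcal{D}^{\leq -A}$ because $n\leq A$. By \Cref{setup:standing}\eqref{setup:standinga3},
\[
\operatorname{Hom}(G[n],j_!D_U)=\operatorname{Hom}(G,j_!D_U[-n])=0 .
\]
Hence $g$ factors as $G[n]\xrightarrow{u} j_!E_U\to j_!j^\ast Y[1]$. Moreover, since $j_!G_U\in\overline{\langle G\rangle}^{[0,0]}$ and $j_!$ preserves coproducts, \Cref{lem:closures}\eqref{lem:closures3} gives $j_!E_U\in\overline{\langle G\rangle}^{[1-3A,2A]}\cap\mathcal{D}^{\leq A}$.

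\textbf{Constructing the lift.} Let $E$ be a cocone of $u$, so there is a triangle
\[
E\to G[n]\xrightarrow{u} j_!E_U\to E[1].
\]
The square formed by $u$, $\tilde f$, the map $j_!E_U\to j_!j^\ast Y[1]$ and $\varphi_Y$ commutes by construction. Both rows are distinguished (after rotating), so TR3 produces $q\colon E\to Y$ completing a morphism of triangles; in particular $\eta_Y\circ q=\tilde f\circ(E\to G[n])$. Applying $i^\ast$ and using $i^\ast j_!=0$, the map $E\to G[n]$ becomes an isomorphism $i^\ast E\cong (i^\ast G)[n]$; call this $\theta$. Also $i^\ast\eta_Y$ is an isomorphism, and under adjunction $i^\ast\tilde f$ corresponds to $f^\prime$. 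Combining these, $i^\ast q=f^\prime\circ\theta$. I expect this identification, namely tracking the unit/counit identity $i^\ast\eta_Y$ versus the counit $i^\ast i_\ast\to 1$, to be the only fiddly point, and it is routine.

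\textbf{Bounds.} It remains to check $E\in\mathcal{P}$. Rotating the defining triangle exhibits $E$ as an extension of $G[n]$ by $j_!E_U[-1]$. First, $G[n]\in\overline{\langle G\rangle}^{[-n,-n]}\subseteq\overline{\langle G\rangle}^{[-A,A+1]}$, and $G[n]\in\mathcal{D}^{\leq -n}\subseteq\mathcal{D}^{\leq K}$ since $n\geq -A-1$. Second, $j_!E_U[-1]\in\overline{\langle G\rangle}^{[2-3A,2A+1]}\cap\mathcal{D}^{\leq A+1}$. Since $A\geq 1$, both intervals lie in $[-3A,3A]=[-B,B]$. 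Closure of $\mathcal{P}$ under extensions (\Cref{setup:standing}) then gives $E\in\mathcal{P}$. The main point of the argument is the choice $m=3A$, which is exactly what makes the Hom-vanishing apply uniformly over $n\leq A$ while keeping the generator range inside $[-B,B]$.
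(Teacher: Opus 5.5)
Your proposal is correct and follows essentially the same route as the paper. You pass to the adjunct $G[n]\to i_\ast i^\ast Y$, factor the obstruction $\varphi_Y\circ\tilde f$ through $j_!$ of a weak approximation of $j^\ast Y$ using $\operatorname{Hom}(G,\mathcal{D}^{\leq -A})=0$, take the cocone and apply TR3, and check the bounds on the two pieces of the extension. The only difference is bookkeeping: you approximate $j^\ast Y[1]$ directly with $m=3A$, while the paper approximates $j^\ast Y$ with $m=3A+1$ and shifts the factorization by $[1]$, and both choices land inside $[-B,B]$ and $\mathcal{D}^{\leq K}$.
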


\begin{proof}
    The following proof is a variant of \cite[Lemma 5.3]{Burke/Neeman/Pauwels:2023}.
    Specifically, the lift is constructed directly from the adjunct $g$ rather than from a roof \cite[Lemma 5.2]{Burke/Neeman/Pauwels:2023}, and it lies in $\mathcal{D}^{\leq K}$ for the same $K$ that bounds $Y$.

    Since $i^\ast$ is exact, $i^\ast(G[n])$ can be identified with $(i^\ast G)[n]$.
    Hence, $f^\prime$ yields a morphism $i^\ast(G[n])\to i^\ast Y$.
    Denote by $\eta$ the unit and $\overline{\varepsilon}$ the counit of $i^\ast\dashv i_\ast$.

    Set $M := j^\ast Y$.
    By \Cref{lem:glued}, $M\in \mathcal{D}_U^{\leq K}$, and so $M[K]\in \mathcal{D}_U^{\leq 0}$.
    Apply \Cref{lem:iterate} to the weak approximation data $(G_U,\mathcal{D}_U^{\leq 0},A)$ of \Cref{setup:standing}\eqref{setup:standinga2}, $M[K]$, and $3A+1 = K+2A$ in place of $m$.
    This gives a distinguished triangle
    \begin{displaymath}
        W^\prime\to M[K]\to V^\prime\to W^\prime[1].
    \end{displaymath}
    Here $W^\prime\in \overline{\langle G_U\rangle}^{[-4A,A]}\cap \mathcal{D}_U^{\leq 0}$ and $V^\prime\in \mathcal{D}_U^{\leq -3A-1}$.
    Define $W := W^\prime[-K]$ and $V := V^\prime[-K]$.
    Shifting by $[-K]$ yields a distinguished triangle
    \begin{equation}
        \label{eq:onecell_wc}
        W\xrightarrow{w} M\xrightarrow{c} V\to W[1].
    \end{equation}
    By \Cref{lem:closures}\eqref{lem:closures2},
    \begin{displaymath}
        W\in \overline{\langle G_U\rangle}^{[-4A+K,A+K]} = \overline{\langle G_U\rangle}^{[1-3A,2A+1]}.
    \end{displaymath}
    Moreover, $W\in \mathcal{D}_U^{\leq 0}[-K] = \mathcal{D}_U^{\leq K}$ and $V\in \mathcal{D}_U^{\leq -3A-1}[-K] = \mathcal{D}_U^{\leq -2A}$.

    Now let $g\colon G[n]\to i_\ast i^\ast Y$ be the adjunct of $f^\prime$.
    That is, $g = i_\ast(f^\prime)\circ \eta_{G[n]}$, and so $f^\prime = \overline{\varepsilon}_{i^\ast Y}\circ i^\ast(g)$.
    Set
    \begin{displaymath}
        \varphi := \varphi_Y\colon i_\ast i^\ast Y\to j_! M[1]
    \end{displaymath}
    to be the morphism in \eqref{eq:gluing_triangles}, and $\xi := \varphi\circ g$. 
    By \Cref{lem:glued}, $j_! V\in \mathcal{D}^{\leq -2A}$.
    As $n\leq A$, 
    \begin{displaymath}
        j_! V[1-n]\in \mathcal{D}^{\leq n-1-2A}\subseteq \mathcal{D}^{\leq -A}.
    \end{displaymath}
    Hence, \Cref{setup:standing}\eqref{setup:standinga3} gives
    \begin{equation}
        \label{eq:onecell_vanishing}
        \operatorname{Hom}(G[n],j_! V[1])\cong \operatorname{Hom}(G,j_! V[1-n]) = 0.
    \end{equation}
    Applying $j_!$ to \eqref{eq:onecell_wc} gives a distinguished triangle
    \begin{displaymath}
        j_! W\xrightarrow{j_!(w)} j_! M \xrightarrow{j_!(c)} j_! V\to j_! W[1].
    \end{displaymath}
    By \cite[Lemma 1.1.10]{Neeman:2001}, $\operatorname{Hom}(G[n-1],-)$ is a homological functor on $\mathcal{D}$.
    This yields an exact sequence
    \begin{displaymath}
        \operatorname{Hom}(G[n-1],j_! W)
        \xrightarrow{j_!(w)\circ -} \operatorname{Hom}(G[n-1],j_! M)
        \xrightarrow{j_!(c)\circ -} \operatorname{Hom}(G[n-1],j_! V).
    \end{displaymath}
    By \eqref{eq:onecell_vanishing}, 
    \begin{displaymath}
        \operatorname{Hom}(G[n-1],j_! V)\cong \operatorname{Hom}(G[n],j_! V[1]) = 0.
    \end{displaymath}
    Hence, the left-hand map is surjective.
    Thus, there exists $\xi^{\prime\prime}\colon G[n-1]\to j_! W$ such that $\xi[-1] = j_!(w)\circ \xi^{\prime\prime}$.
    Setting $\xi^\prime := \xi^{\prime\prime}[1]\colon G[n]\to j_! W[1]$, we get $\xi = j_!(w)[1]\circ \xi^\prime$.

    Next we construct the lift.
    Completing $\xi^\prime$ and rotating backwards twice yields a distinguished triangle
    \begin{equation}
        \label{eq:onecell_triangle}
        j_! W\xrightarrow{\alpha} Q\xrightarrow{\beta} G[n]\xrightarrow{\xi^\prime} j_! W[1].
    \end{equation}
    Consider the diagram
    \begin{displaymath}
        \begin{tikzcd}
            j_! W \ar[r, "\alpha"] \ar[d, "j_!(w)"'] & Q \ar[r, "\beta"] \ar[d, dashed, "q"] & G[n] \ar[r, "\xi^\prime"] \ar[d, "g"] & j_! W[1] \ar[d, "{j_!(w)[1]}"] \\
            j_! M \ar[r, "\epsilon_Y"] & Y \ar[r, "\eta_Y"] & i_\ast i^\ast Y \ar[r, "\varphi"] & j_! M [1]
        \end{tikzcd}
    \end{displaymath}
    with distinguished rows.
    Indeed, the top row is \eqref{eq:onecell_triangle}, whereas the bottom row is the first triangle of \eqref{eq:gluing_triangles} for $E = Y$ (i.e.\ use that $M = j^\ast Y$).
    The right-hand square commutes because 
    \begin{displaymath}
        \varphi\circ g = \xi = j_!(w)[1]\circ \xi^\prime.
    \end{displaymath}
    By TR3, applied after rotating both rows backwards, there exists $q\colon Q\to Y$ making the diagram commute.
    In particular, $\eta_Y\circ q = g\circ \beta$.

    Set $\theta := i^\ast\beta\colon i^\ast Q\to i^\ast(G[n]) = (i^\ast G)[n]$.
    Recall that $i^\ast$ is exact and $i^\ast j_! = 0$.
    Applying $i^\ast$ to \eqref{eq:onecell_triangle} gives a distinguished triangle
    \begin{displaymath}
        0\to i^\ast Q\xrightarrow{\theta} i^\ast(G[n])\to 0[1],
    \end{displaymath}
    and so $\theta$ is an isomorphism.
    Moreover, applying $i^\ast$ to $\eta_Y\circ q = g\circ \beta$ and using the triangle identity $\overline{\varepsilon}_{i^\ast Y}\circ i^\ast(\eta_Y) = 1_{i^\ast Y}$, it follows that
    \begin{displaymath}
        i^\ast q
        = \overline{\varepsilon}_{i^\ast Y}\circ i^\ast(\eta_Y)\circ i^\ast q
        = \overline{\varepsilon}_{i^\ast Y}\circ i^\ast(g)\circ i^\ast\beta
        = f^\prime\circ \theta.
    \end{displaymath}

    Lastly, we check that $Q$ lies in $\mathcal{P}$.
    By \eqref{eq:onecell_triangle}, $Q\in j_! W\star G[n]$.
    Since $\mathcal{P}$ is closed under extensions, it suffices to show that $j_! W, G[n]\in \mathcal{P}$.
    The functor $j_!$ is exact, preserves small coproducts, and satisfies $j_! G_U\in \overline{\langle G\rangle}^{[0,0]}$ by \Cref{setup:standing}.
    By \Cref{lem:closures}\eqref{lem:closures3}, $j_! W\in \overline{\langle G\rangle}^{[1-3A,2A+1]}$.
    As $A\geq 1$, we have $2A+1\leq 3A = B$, so \Cref{lem:closures}\eqref{lem:closures2} gives $j_! W\in \overline{\langle G\rangle}^{[-B,B]}$.
    Moreover, by \Cref{lem:glued}, $W\in \mathcal{D}_U^{\leq K}$ implies $j_! W\in \mathcal{D}^{\leq K}$.
    Since $-A-1\leq n\leq A$, one has $-A\leq -n\leq A+1$, and so $G[n] = G[-(-n)]\in G[-A,A+1]$.
    Applying \Cref{lem:closures}\eqref{lem:closures2} with $A+1\leq 3A$, $G[n]\in \overline{\langle G\rangle}^{[-A,A+1]}\subseteq \overline{\langle G\rangle}^{[-B,B]}$.
    As $G\in \mathcal{D}^{\leq 0}$ by \Cref{setup:standing}, $G[n]\in \mathcal{D}^{\leq -n}\subseteq \mathcal{D}^{\leq A+1} = \mathcal{D}^{\leq K}$.
    Thus, $Q\in \mathcal{P}$, and so $(q,\theta)$ is a $\mathcal{P}$-lift of $f^\prime$.
\end{proof}

\begin{notation}
    Assume \Cref{setup:standing}. 
    Denote by $\mathcal{C}\subseteq \mathcal{D}_Z$ the full subcategory of objects $E^\prime$ such that every morphism $f^\prime \colon E^\prime \to i^\ast Y$ with $Y\in \mathcal{D}^{\leq K}$ admits a $\mathcal{P}$-lift.
\end{notation}

\begin{proposition}
    \label{prop:closure}
    Assume \Cref{setup:standing}. 
    The subcategory $\mathcal{C}$ is strictly full, closed under small coproducts and extensions, and contains $(i^\ast G)[n]$ for $-A-1\leq n\leq A$.
    Consequently,
    \begin{displaymath}
        \overline{\langle i^\ast G\rangle}^{[-A,A]}\subseteq \operatorname{Coprod}((i^\ast G)[-A,A+1])\subseteq \mathcal{C}.
    \end{displaymath}
\end{proposition}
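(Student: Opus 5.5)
Strict fullness is immediate. If $\theta_0\colon E^\prime_1\to E^\prime$ is an isomorphism and $E^\prime\in\mathcal{C}$, take $f^\prime\colon E^\prime_1\to i^\ast Y$. Lift $f^\prime\circ\theta_0^{-1}$ to $(q,\theta)$, and then $(q,\theta_0^{-1}\circ\theta)$ is a $\mathcal{P}$-lift of $f^\prime$. Containment of $(i^\ast G)[n]$ for $-A-1\leq n\leq A$ is exactly \Cref{lem:onecell}.

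For coproducts, let $E^\prime=\bigoplus_\lambda E^\prime_\lambda$ with each $E^\prime_\lambda\in\mathcal{C}$, and let $f^\prime\colon E^\prime\to i^\ast Y$ with $Y\in\mathcal{D}^{\leq K}$. Restrict $f^\prime$ to each summand to get $f^\prime_\lambda$, and choose lifts $(q_\lambda,\theta_\lambda)$ with $q_\lambda\colon E_\lambda\to Y$. Set $E=\bigoplus E_\lambda$, which lies in $\mathcal{P}$ because $\mathcal{P}$ is closed under coproducts (\Cref{setup:standing}). Let $q$ be the induced map $E\to Y$, and let $\theta=\bigoplus\theta_\lambda$, using that $i^\ast$ preserves coproducts. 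The identity $i^\ast q=f^\prime\circ\theta$ can be checked on each summand.

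For extensions, let $E^\prime_1\xrightarrow{a}E^\prime\xrightarrow{b}E^\prime_2\xrightarrow{c}E^\prime_1[1]$ be a triangle with $E^\prime_1,E^\prime_2\in\mathcal{C}$, and let $f^\prime\colon E^\prime\to i^\ast Y$.
\begin{enumerate}
\item Lift $f^\prime\circ a$ to $(q_1\colon E_1\to Y,\theta_1)$. Complete $q_1$ to a triangle $E_1\xrightarrow{q_1}Y\xrightarrow{r}Y_2\to E_1[1]$.
\item Check $Y_2\in\mathcal{D}^{\leq K}$: it is an extension of $Y$ and $E_1[1]$, and $E_1[1]\in\mathcal{D}^{\leq K-1}$ because $E_1\in\mathcal{P}\subseteq\mathcal{D}^{\leq K}$. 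This is precisely why the lifts must land in the same aisle $\mathcal{D}^{\leq K}$ as their targets, as emphasized in the introduction.
\item Apply $i^\ast$ to the triangle in step 1. The composite $i^\ast(r)\circ f^\prime\circ a=i^\ast(r)\circ i^\ast(q_1)\circ\theta_1^{-1}$ vanishes, so $i^\ast(r)\circ f^\prime$ factors through $b$ as some $f^\prime_2\colon E^\prime_2\to i^\ast Y_2$.
\item Lift $f^\prime_2$ to $(q_2\colon E_2\to Y_2,\theta_2)$.
\item Consider the composite $E_2\xrightarrow{q_2}Y_2\to E_1[1]$. Let $E$ be its cocone, so that $E\in E_1\star E_2\subseteq\mathcal{P}$. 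The octahedral axiom, or TR3 applied to the map of triangles from $(E_1\to E\to E_2)$ to $(E_1\to Y\to Y_2)$, produces $q\colon E\to Y$ compatible with $q_1$ and $q_2$.
\item Applying $i^\ast$ gives a morphism of triangles from $i^\ast E_1\to i^\ast E\to i^\ast E_2$ to $E^\prime_1\to E^\prime\to E^\prime_2$, and the aim is an isomorphism $\theta\colon i^\ast E\to E^\prime$ with $i^\ast q=f^\prime\circ\theta$.
\end{enumerate}

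Step 6 is the main obstacle. The third edges must be compared, meaning $i^\ast$ of the connecting map $E_2\to E_1[1]$ must be matched with $c$ via $\theta_1,\theta_2$. After that one needs a fill-in $\theta$ that is simultaneously compatible with $f^\prime$, and TR3 fill-ins are not unique. I would first try to build $\theta$ by working on the $\mathcal{D}_Z$ side via the octahedron on $E^\prime\to i^\ast Y\to i^\ast Y_2$. If compatibility with $f^\prime$ fails, I would weaken what is tracked, for instance modify $\theta$ by a map factoring through $E^\prime_2\to E^\prime_1[1]$ and check that $f^\prime\circ\theta$ is unchanged modulo the error, or adjust the choice of $f^\prime_2$ to absorb the discrepancy. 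Once $\theta$ is a morphism of triangles with two isomorphic components, it is an isomorphism by \Cref{sec:prelim_terminology}.

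Finally, $\mathcal{C}$ contains $(i^\ast G)[-A,A+1]$ and is closed under coproducts and extensions, so it contains $\operatorname{Coprod}((i^\ast G)[-A,A+1])$. This in turn contains $\overline{\langle i^\ast G\rangle}^{[-A,A]}$ by \Cref{lem:closures}\eqref{lem:closures5}.
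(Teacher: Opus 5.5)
There is a genuine gap in the extension step, and it starts at step 3, before the obstacle you flag in step 6. Strict fullness, coproducts, the generators via \Cref{lem:onecell}, the bound $Y_2\in\mathcal{D}^{\leq K}$, and the final inclusions are fine and match the paper.

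In step 3, $f^\prime_2$ is only required to satisfy $f^\prime_2\circ b=i^\ast(r)\circ f^\prime$. This determines $f^\prime_2$ only up to adding maps $\phi\circ c$ with $\phi\colon E^\prime_1[1]\to i^\ast Y_2$, and a bad choice makes the construction fail outright. Take $Y=0$, $E^\prime_2=i^\ast G$, $E^\prime_1=E^\prime_2[-1]$ (both in $\mathcal{C}$ by \Cref{lem:onecell}) and $E^\prime=0$, with $c$ an isomorphism. Then:
\begin{itemize}
\item $q_1=0$ and $Y_2\cong E_1[1]$;
\item since $b\colon 0\to E^\prime_2$, the choice $f^\prime_2=0$ is allowed, and $(0,\theta_2)$ is a valid lift of it;
\item your $E$ is the cocone of the zero map $E_2\to E_1[1]$, namely $E_1\oplus E_2$.
\end{itemize}
So $i^\ast E\cong E^\prime_1\oplus E^\prime_2$, which is not isomorphic to $E^\prime=0$ once $\mathcal{D}_Z\neq 0$, and no $\theta$ exists. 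Even with a good $f^\prime_2$, building $q$ and $\theta$ as TR3 fill-ins between $i^\ast E_1\to i^\ast E\to i^\ast E_2$ and $E^\prime_1\to E^\prime\to E^\prime_2$ only shows that $i^\ast q-f^\prime\circ\theta$ vanishes on $i^\ast E_1$. Fill-ins are not unique, so nothing forces this difference to vanish. Your proposed repairs (modifying $\theta$, adjusting $f^\prime_2$) do not say how to kill it.

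The missing idea is to compare the cones of $i^\ast q$ and $f^\prime$ over the common vertex $i^\ast Y$, rather than the two extension triangles.
\begin{enumerate}
\item Choose $f^\prime_2$ by the octahedral axiom (\Cref{rmk:octahedral}\eqref{rmk:octahedraloc}) for $E^\prime_1\xrightarrow{a}E^\prime\xrightarrow{f^\prime}i^\ast Y$. Use a triangle $E^\prime\xrightarrow{f^\prime}i^\ast Y\xrightarrow{p^\prime}R^\prime\to E^\prime[1]$, and as the triangle on $f^\prime\circ a$ use the $i^\ast$-image of the triangle on $q_1$, transported along $\theta_1$. This gives a triangle $E^\prime_2\xrightarrow{f^\prime_2}i^\ast Y_2\xrightarrow{h}R^\prime\to E^\prime_2[1]$ with $h\circ i^\ast(r)=p^\prime$.
\item Lift $f^\prime_2$ to $(q_2,\theta_2)$ and complete $E_2\xrightarrow{q_2}Y_2\xrightarrow{s_2}Y^{\prime\prime}\to E_2[1]$.
\item Define $E$ as the cocone of $s_2\circ r\colon Y\to Y^{\prime\prime}$, so that $q$ comes with a triangle $E\xrightarrow{q}Y\xrightarrow{s_2\circ r}Y^{\prime\prime}\to E[1]$. \Cref{rmk:octahedral}\eqref{rmk:octahedralof} still gives $E\in E_1\star E_2\subseteq\mathcal{P}$.
\item TR3 gives an isomorphism $\chi\colon i^\ast Y^{\prime\prime}\to R^\prime$ with $\chi\circ i^\ast(s_2)=h$, hence $\chi\circ i^\ast(s_2\circ r)=p^\prime$.
\item Apply TR3, after rotating, to $i^\ast E\xrightarrow{i^\ast q}i^\ast Y\to i^\ast Y^{\prime\prime}$ and $E^\prime\xrightarrow{f^\prime}i^\ast Y\xrightarrow{p^\prime}R^\prime$ with vertical maps $1$ and $\chi$. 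This produces $\theta$ for which $f^\prime\circ\theta=i^\ast q$ is one of the commuting squares, and $\theta$ is an isomorphism by two-out-of-three.
\end{enumerate}
Compatibility with $f^\prime$ is then built into the fill-in instead of having to be repaired afterwards.
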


\begin{proof}
    First we show that $\mathcal{C}$ is strictly full.
    Let $\sigma\colon E^\prime\to E^{\prime\prime}$ be an isomorphism with $E^{\prime\prime}\in \mathcal{C}$.
    Fix $f^\prime\colon E^\prime\to i^\ast Y$ with $Y\in \mathcal{D}^{\leq K}$.
    Since $E^{\prime\prime}\in \mathcal{C}$, $f^\prime\circ \sigma^{-1}$ has a $\mathcal{P}$-lift $(q,\theta)$.
    Then $\sigma^{-1}\circ \theta$ is an isomorphism, and $i^\ast q = f^\prime\circ (\sigma^{-1}\circ \theta)$.
    Thus, $(q,\sigma^{-1}\circ \theta)$ is a $\mathcal{P}$-lift of $f^\prime$. Hence, $E^\prime\in \mathcal{C}$.

    By \Cref{lem:onecell}, $(i^\ast G)[n] \in \mathcal{C}$  for $-A-1\leq n\leq A$.

    Next we show that $\mathcal{C}$ is closed under small coproducts.
    Choose a set $\Lambda$.
    Pick some $E^\prime_\lambda\in \mathcal{C}$ for each $\lambda\in \Lambda$.
    Put $E^\prime := \bigoplus_{\lambda\in \Lambda} E^\prime_\lambda$.
    Denote by $\iota_\lambda\colon E^\prime_\lambda\to E^\prime$ the inclusions.
    Fix $f^\prime\colon E^\prime\to i^\ast Y$ such that $Y\in \mathcal{D}^{\leq K}$.
    For each $\lambda \in \Lambda$, there exists a $\mathcal{P}$-lift $(q_\lambda\colon E_\lambda\to Y,\theta_\lambda)$ of $f^\prime\circ \iota_\lambda$.
    Define $E := \bigoplus_{\lambda\in \Lambda} E_\lambda$.
    Set $\kappa_\lambda\colon E_\lambda\to E$ to be the inclusions for each $\lambda \in \Lambda$.
    Since $\mathcal{P}$ is closed under small coproducts, $E\in \mathcal{P}$.
    Denote by $q\colon E\to Y$ the morphism with $q\circ \kappa_\lambda = q_\lambda$ for each $\lambda \in \Lambda$.
    As $i^\ast$ preserves small coproducts, $i^\ast E$ is the coproduct of the $i^\ast E_\lambda$ with inclusions $i^\ast(\kappa_\lambda)$ for $\lambda \in \Lambda$.
    Hence, there exists $\theta\colon i^\ast E\to E^\prime$ with $\theta\circ i^\ast(\kappa_\lambda) = \iota_\lambda\circ \theta_\lambda$ for each $\lambda \in \Lambda$.
    Note that $\theta$ is an isomorphism because it is a coproduct of the isomorphisms $\theta_\lambda$.
    For each $\lambda \in \Lambda$,
    \begin{displaymath}
        i^\ast q\circ i^\ast(\kappa_\lambda)
        = i^\ast(q_\lambda)
        = f^\prime\circ \iota_\lambda\circ \theta_\lambda
        = f^\prime\circ \theta\circ i^\ast(\kappa_\lambda).
    \end{displaymath}
    Hence, by the universal property of coproducts, $i^\ast q = f^\prime\circ \theta$.
    Thus, $(q,\theta)$ is a $\mathcal{P}$-lift of $f^\prime$.

    Now we prove that $\mathcal{C}$ is closed under extensions.
    Consider a distinguished triangle
    \begin{displaymath}
        E^\prime_1\xrightarrow{u^\prime} E^\prime\to E^\prime_2\to E^\prime_1[1]
    \end{displaymath}
    where $E^\prime_1,E^\prime_2\in \mathcal{C}$.
    Fix $f^\prime\colon E^\prime\to i^\ast Y$ with $Y\in \mathcal{D}^{\leq K}$.
    We will lift $f^\prime$ in two stages: first on $E^\prime_1$, and then on $E^\prime_2$, into the cone of the first lift.

    We start with $E^\prime_1$.
    As $E^\prime_1\in \mathcal{C}$, $f^\prime_1 := f^\prime\circ u^\prime$ has a $\mathcal{P}$-lift $(q_1\colon E_1\to Y,\theta_1)$.
    Complete to a distinguished triangle
    \begin{displaymath}
        E_1\xrightarrow{q_1} Y\xrightarrow{r} Y^\prime\xrightarrow{s} E_1[1].
    \end{displaymath}
    Since $E_1\in \mathcal{P} \subseteq \mathcal{D}^{\leq K}$, $E_1[1]\in \mathcal{D}^{\leq K-1}\subseteq \mathcal{D}^{\leq K}$.
    As aisles are closed under extensions, $Y^\prime\in Y\star E_1[1]\subseteq \mathcal{D}^{\leq K}$.
    Applying $i^\ast$ to this distinguished triangle gives the top row of the diagram
    \begin{displaymath}
        \begin{tikzcd}[column sep=large]
            i^\ast E_1 \ar[r, "i^\ast q_1"] \ar[d, "\theta_1"'] & i^\ast Y \ar[r, "i^\ast r"] \ar[d, equal] & i^\ast Y^\prime \ar[r, "i^\ast s"] \ar[d, equal] & (i^\ast E_1)[1] \ar[d, "{\theta_1[1]}"] \\
            E^\prime_1 \ar[r, "f^\prime_1"] & i^\ast Y \ar[r, "i^\ast r"] & i^\ast Y^\prime \ar[r, "{\theta_1[1]\circ i^\ast s}"] & E^\prime_1[1].
        \end{tikzcd}
    \end{displaymath}
    Since $i^\ast q_1 = f^\prime_1\circ \theta_1$, the left square commutes.
    By construction, the other two squares commute.
    As all vertical maps are isomorphisms, the bottom row is isomorphic to a distinguished triangle, and hence is distinguished.

    Next we find the morphism that $f^\prime$ induces on $E^\prime_2$.
    Again, we complete to a distinguished triangle
    \begin{displaymath}
        E^\prime\xrightarrow{f^\prime} i^\ast Y\xrightarrow{p^\prime} R^\prime\to E^\prime[1].
    \end{displaymath}
    We apply \Cref{rmk:octahedral}\eqref{rmk:octahedraloc} to $E^\prime_1\xrightarrow{u^\prime} E^\prime\xrightarrow{f^\prime} i^\ast Y$.
    For this, we use the given distinguished triangle on $u^\prime$, the chosen distinguished triangle on $f^\prime$, and the bottom row of the diagram above as the distinguished triangle on $f^\prime\circ u^\prime = f^\prime_1$.
    This gives a distinguished triangle
    \begin{displaymath}
        E^\prime_2\xrightarrow{f^\prime_2} i^\ast Y^\prime\xrightarrow{h} R^\prime\to E^\prime_2[1]
    \end{displaymath}
    such that the following diagram commutes:
    \begin{displaymath}
        \begin{tikzcd}
            i^\ast Y \ar[r, "i^\ast r"] \ar[dr, "p^\prime"'] & i^\ast Y^\prime \ar[d, "h"] \\
            & R^\prime.
        \end{tikzcd}
    \end{displaymath}

    Now we lift $f^\prime_2$.
    Since $E^\prime_2\in \mathcal{C}$ and $Y^\prime\in \mathcal{D}^{\leq K}$, $f^\prime_2$ has a $\mathcal{P}$-lift $(q_2\colon E_2\to Y^\prime,\theta_2)$.
    Choose a distinguished triangle
    \begin{displaymath}
        E_2\xrightarrow{q_2} Y^\prime\xrightarrow{s_2} Y^{\prime\prime}\to E_2[1].
    \end{displaymath}

    Next we combine the two lifts.
    Complete to a distinguished triangle
    \begin{displaymath}
        E\xrightarrow{q} Y\xrightarrow{s_2\circ r} Y^{\prime\prime}\to E[1].
    \end{displaymath}
    The cocones of $r$, $s_2$, and $s_2\circ r$ are respectively $E_1$, $E_2$, and $E$.
    Hence, \Cref{rmk:octahedral}\eqref{rmk:octahedralof} gives a distinguished triangle $E_1\to E\to E_2\to E_1[1]$.
    Since $\mathcal{P}$ is closed under extensions, $E\in E_1\star E_2\subseteq \mathcal{P}$.

    It remains to compare $i^\ast q$ with $f^\prime$.
    We do this in two steps.
    First, applying $i^\ast$ to the distinguished triangle on $q_2$ gives the top row of the diagram
    \begin{displaymath}
        \begin{tikzcd}[column sep=large]
            i^\ast E_2 \ar[r, "i^\ast q_2"] \ar[d, "\theta_2"'] & i^\ast Y^\prime \ar[r, "i^\ast s_2"] \ar[d, equal] & i^\ast Y^{\prime\prime} \ar[r] \ar[d, dashed, "\chi"] & (i^\ast E_2)[1] \ar[d, "{\theta_2[1]}"] \\
            E^\prime_2 \ar[r, "f^\prime_2"] & i^\ast Y^\prime \ar[r, "h"] & R^\prime \ar[r] & E^\prime_2[1].
        \end{tikzcd}
    \end{displaymath}
    Note that the bottom row is the distinguished triangle on $f^\prime_2$ found above.
    As $i^\ast q_2 = f^\prime_2\circ \theta_2$, the left square commutes.
    By TR3, there exists a morphism $\chi\colon i^\ast Y^{\prime\prime}\to R^\prime$ making the diagram commute.
    Hence, $\chi\circ i^\ast s_2 = h$.
    Since $\theta_2$ and the identity are isomorphisms, so is  $\chi$.
    Combine with $h\circ i^\ast r = p^\prime$ to obtain
    \begin{displaymath}
        \chi\circ i^\ast(s_2\circ r) = h\circ i^\ast r = p^\prime.
    \end{displaymath}
    Applying $i^\ast$ to the distinguished triangle on $q$ gives 
    \begin{displaymath}
        \begin{tikzcd}[column sep=large]
            i^\ast E \ar[r, "i^\ast q"] \ar[d, dashed, "\theta"'] & i^\ast Y \ar[r, "i^\ast(s_2\circ r)"] \ar[d, equal] & i^\ast Y^{\prime\prime} \ar[r] \ar[d, "\chi"] & (i^\ast E)[1] \ar[d, dashed, "{\theta[1]}"] \\
            E^\prime \ar[r, "f^\prime"] & i^\ast Y \ar[r, "p^\prime"] & R^\prime \ar[r] & E^\prime[1].
        \end{tikzcd}
    \end{displaymath}
    The bottom row is the distinguished triangle on $f^\prime$, whereas the middle square commutes by the diagram above.
    By TR3, applied after rotating both rows, there exists a morphism $\theta\colon i^\ast E\to E^\prime$ making the diagram commute.
    Hence, $f^\prime\circ \theta = i^\ast q$.
    As before, $\theta$ is an isomorphism, because $1$ and $\chi$ are.
    Since $E\in \mathcal{P}$, $(q,\theta)$ is a $\mathcal{P}$-lift of $f^\prime$.
    Hence, $E^\prime\in \mathcal{C}$.

    Finally, we prove the two inclusions.
    The objects of $(i^\ast G)[-A,A+1]$ are $(i^\ast G)[-k]$ with $-A\leq k\leq A+1$.
    For $n = -k$, these are the objects $(i^\ast G)[n]$ with $-A-1\leq n\leq A$.
    Hence, by \Cref{lem:onecell}, these lie in $\mathcal{C}$.
    Moreover, by definition, $\operatorname{Coprod}((i^\ast G)[-A,A+1])$ is the smallest strictly full subcategory containing these objects that is closed under small coproducts and extensions.
    Since $\mathcal{C}$ has all these properties, $\operatorname{Coprod}((i^\ast G)[-A,A+1])\subseteq \mathcal{C}$.
    The first inclusion is \Cref{lem:closures}\eqref{lem:closures5}.
\end{proof}

\begin{remark}
    \Cref{prop:closure} replaces the induction on the number of cones in \cite[Lemmas 5.4 and 5.5]{Burke/Neeman/Pauwels:2023}, which requires approximability.
\end{remark}

\section{Proofs of results}
\label{sec:proof_of_results}

\begin{proof}
    [Proof of \Cref{thm:glueing_weak_approx}]
    First, let $\mathcal{D}$ be weakly approximable.
    Denote by $\mathcal{D}^{\leq 0}$ the aisle compactly generated by $G^\prime$.
    Note that $G^\prime\in \mathcal{D}^{\leq 0}$.
    By \cite[Lemma 3.7]{Burke/Neeman/Pauwels:2023}, there exists an integer $A^\prime>0$ such that $(G^\prime,\mathcal{D}^{\leq 0},A^\prime)$ satisfies \Cref{def:weak_approx}\eqref{def:weak_approx1}.
    It follows that $\operatorname{Hom}(G^\prime,\mathcal{D}^{\leq -A^\prime})=0$.
    For $n\geq A^\prime$, $G^\prime[n]\in \mathcal{D}^{\leq -n}\subseteq \mathcal{D}^{\leq -A^\prime}$.
    Hence, for all $n\geq A^\prime$, $\operatorname{Hom}(G^\prime,G^\prime[n])=0$.
    This argument applies to every compact generator of $\mathcal{D}$.

    Conversely, let $\operatorname{Hom}(G^\prime,G^\prime[n])=0$ for $n\gg 0$.
    Hence, we are in the setting of \Cref{setup:standing}, and we use its notation. 
    By \Cref{lem:normalize}\eqref{lem:normalize2}, $G$ is a compact generator of $\mathcal{D}$.
    We show that $(G,\mathcal{D}^{\leq 0},3A)$ is weak approximation data for $\mathcal{D}$.

    We start with \Cref{def:weak_approx}\eqref{def:weak_approx1}.
    Since $G\in \mathcal{D}^{\leq 0}$, $G\in \mathcal{D}^{\leq 3A}$.
    As $\mathcal{D}^{\leq -3A}\subseteq \mathcal{D}^{\leq -A}$, \Cref{setup:standing}\eqref{setup:standinga3} implies $\operatorname{Hom}(G,\mathcal{D}^{\leq -3A})=0$.

    Now we verify \Cref{def:weak_approx}\eqref{def:weak_approx2}.
    Fix $X\in \mathcal{D}^{\leq 0}$.
    We will approximate $X$ in two stages: first its image $i^\ast X$ in $\mathcal{D}_Z$, and then what remains in $\mathcal{D}_U$.

    We start on $\mathcal{D}_Z$. 
    By \Cref{lem:glued}, $i^\ast X\in \mathcal{D}_Z^{\leq 0}$.
    By \Cref{setup:standing}\eqref{setup:standinga1}, there exists a distinguished triangle
    \begin{displaymath}
        E^\prime\xrightarrow{f^\prime} i^\ast X\to D^\prime\to E^\prime[1]
    \end{displaymath}
    where $E^\prime\in \overline{\langle i^\ast G\rangle}^{[-A,A]}$ and $D^\prime\in \mathcal{D}_Z^{\leq -1}$.
    By \Cref{prop:closure}, $E^\prime\in \mathcal{C}$.
    Since $X\in \mathcal{D}^{\leq 0}\subseteq \mathcal{D}^{\leq K}$, $f^\prime$ has a $\mathcal{P}$-lift $(q_1\colon E_1\to X,\theta)$.
    Complete to a distinguished triangle
    \begin{displaymath}
        E_1\xrightarrow{q_1} X\xrightarrow{r} N\to E_1[1].
    \end{displaymath}
    Since $E_1\in \mathcal{P}\subseteq \mathcal{D}^{\leq K}$, $E_1[1]\in \mathcal{D}^{\leq K-1}\subseteq \mathcal{D}^{\leq K}$.
    As aisles are closed under extensions, $N\in X\star E_1[1]\subseteq \mathcal{D}^{\leq K}$.
    Applying $i^\ast$ to the distinguished triangle on $q_1$ gives the top row of the diagram
    \begin{displaymath}
        \begin{tikzcd}[column sep=large]
            i^\ast E_1 \ar[r, "i^\ast q_1"] \ar[d, "\theta"'] & i^\ast X \ar[r, "i^\ast r"] \ar[d, equal] & i^\ast N \ar[r] \ar[d, dashed, "\mu"] & (i^\ast E_1)[1] \ar[d, "{\theta[1]}"] \\
            E^\prime \ar[r, "f^\prime"] & i^\ast X \ar[r] & D^\prime \ar[r] & E^\prime[1],
        \end{tikzcd}
    \end{displaymath}
    whose bottom row is the distinguished triangle on $f^\prime$.
    The left square commutes because $i^\ast q_1 = f^\prime\circ \theta$.
    By TR3, there exists $\mu\colon i^\ast N\to D^\prime$ making the diagram commute.
    Since $\theta$ and the identity are isomorphisms, so is $\mu$.
    By definition, aisles are strictly full, and so $i^\ast N\in \mathcal{D}_Z^{\leq -1}$.

    Next we treat the case on $\mathcal{D}_U$.
    By \Cref{lem:glued}, $j^\ast N\in \mathcal{D}_U^{\leq K}$, and so $(j^\ast N)[K]\in \mathcal{D}_U^{\leq 0}$.
    Apply \Cref{lem:iterate} to the weak approximation data $(G_U,\mathcal{D}_U^{\leq 0},A)$ of \Cref{setup:standing}\eqref{setup:standinga2} and the object $(j^\ast N)[K]$, with $K+1$ in place of $m$.
    This gives a distinguished triangle
    \begin{displaymath}
        V^\prime\to (j^\ast N)[K]\to R^\prime\to V^\prime[1]
    \end{displaymath}
    where $V^\prime\in \overline{\langle G_U\rangle}^{[-K-A,A]}$ and $R^\prime\in \mathcal{D}_U^{\leq -K-1}$.
    Define $V := V^\prime[-K]$ and $R := R^\prime[-K]$.
    Shifting by $[-K]$ yields a distinguished triangle
    \begin{displaymath}
        V\xrightarrow{c} j^\ast N\to R\to V[1].
    \end{displaymath}
    By \Cref{lem:closures}\eqref{lem:closures2},
    \begin{displaymath}
        V\in \overline{\langle G_U\rangle}^{[-A,A+K]} = \overline{\langle G_U\rangle}^{[-A,2A+1]}.
    \end{displaymath}
    Moreover, $R\in \mathcal{D}_U^{\leq -K-1}[-K] = \mathcal{D}_U^{\leq -1}$.

    Consider the composite
    \begin{displaymath}
        \begin{tikzcd}
            j_! V \ar[r, "j_!(c)"] \ar[rr, bend left=30, "\psi"] & j_! j^\ast N \ar[r, "\epsilon_N"] & N.
        \end{tikzcd}
    \end{displaymath}
    Complete to a distinguished triangle
    \begin{displaymath}
        j_! V\xrightarrow{\psi} N \xrightarrow{\rho} D^{\prime\prime}\to j_! V[1].
    \end{displaymath}
    We claim that $D^{\prime\prime}\in \mathcal{D}^{\leq -1}$.
    Since $j_!$ is exact, applying it to the distinguished triangle on $c$ gives a distinguished triangle
    \begin{displaymath}
        j_! V\xrightarrow{j_!(c)} j_! j^\ast N\to j_! R\to j_! V[1].
    \end{displaymath}
    Hence, $\operatorname{cone}(j_!(c)) \cong j_! R$.
    By \eqref{eq:gluing_triangles}, $\operatorname{cone}(\epsilon_N)\cong i_\ast i^\ast N$.
    Applying \Cref{rmk:octahedral}\eqref{rmk:octahedraloc} to the composite above gives a distinguished triangle
    \begin{displaymath}
        j_! R\to D^{\prime\prime}\to i_\ast i^\ast N\to j_! R[1].
    \end{displaymath}
    By \Cref{lem:glued}, $R\in \mathcal{D}_U^{\leq -1}$ implies $j_! R\in \mathcal{D}^{\leq -1}$.
    Similarly, $i^\ast N\in \mathcal{D}_Z^{\leq -1}$ implies $i_\ast i^\ast N \in \mathcal{D}^{\leq -1}$.
    As aisles are closed under extensions, $D^{\prime\prime}\in j_! R\star i_\ast i^\ast N \subseteq \mathcal{D}^{\leq -1}$.

    We now assemble the two stages.
    Consider the composite
    \begin{displaymath}
        \begin{tikzcd}
            X \ar[r, "r"] \ar[rr, bend left=30, "\rho\circ r"] & N \ar[r, "\rho"] & D^{\prime\prime}.
        \end{tikzcd}
    \end{displaymath}
    Complete to a distinguished triangle
    \begin{displaymath}
        E\to X\xrightarrow{\rho\circ r} D^{\prime\prime}\to E[1].
    \end{displaymath}
    The cocones of $r$, $\rho$, and $\rho\circ r$ are respectively $E_1$, $j_! V$, and $E$.
    Hence, \Cref{rmk:octahedral}\eqref{rmk:octahedralof} gives a distinguished triangle $E_1\to E\to j_! V\to E_1[1]$.
    Note that $E\in E_1\star j_! V$.

    Lastly we bound the range of $E$.
    Since $E_1\in \mathcal{P}$ and $B = 3A$, $E_1\in \overline{\langle G\rangle}^{[-3A,3A]}$.
    Note that $j_!$ is exact and preserves small coproducts.
    By \Cref{setup:standing}, $j_! G_U\in \overline{\langle G\rangle}^{[0,0]}$.
    By \Cref{lem:closures}\eqref{lem:closures3}, $j_! V\in \overline{\langle G\rangle}^{[-A,2A+1]}$.
    As $2A+1\leq 3A$, \Cref{lem:closures}\eqref{lem:closures2} gives $j_! V\in \overline{\langle G\rangle}^{[-3A,3A]}$.
    By \Cref{lem:closures}\eqref{lem:closures1}, $\overline{\langle G\rangle}^{[-3A,3A]}$ is closed under extensions, and so $E\in \overline{\langle G\rangle}^{[-3A,3A]}$.

    Thus, $E\to X\to D^{\prime\prime}\to E[1]$ is a distinguished triangle with $E\in \overline{\langle G\rangle}^{[-3A,3A]}$ and $D^{\prime\prime}\in \mathcal{D}^{\leq -1}$.
    As $X\in \mathcal{D}^{\leq 0}$ was arbitrary, $(G,\mathcal{D}^{\leq 0},3A)$ is weak approximation data for $\mathcal{D}$.
    Therefore, $\mathcal{D}$ is weakly approximable.
\end{proof}

\begin{remark}
    The proof of \Cref{thm:glueing_weak_approx} bootstraps that of \cite[Theorem 5.1]{Burke/Neeman/Pauwels:2023}.
    The `only if' direction is \cite[Remark 3]{Burke/Neeman/Pauwels:2023}.
    For the `if' direction, the two stages compare as follows.
    \begin{itemize}
        \item (On $\mathcal{D}_Z$) In \cite{Burke/Neeman/Pauwels:2023}, the approximation $E^\prime \to i^\ast X$ has $E^\prime \in \overline{\langle i^\ast G\rangle}^{[-A,A]}_A$ (see loc.\ cit.\ for notation). 
        By \cite[Lemma 5.5]{Burke/Neeman/Pauwels:2023}, it can be lifted.
        In fact, loc.\ cit.\ proceeds by induction on the number of cones, which requires approximability.
        Here $E^\prime$ is only known to lie in $\overline{\langle i^\ast G\rangle}^{[-A,A]}$.
        Instead, it is lifted by \Cref{prop:closure}.
        \item (On $\mathcal{D}_U$) The octahedra are those in the last paragraph of the proof of \cite[Theorem 5.1]{Burke/Neeman/Pauwels:2023}.
        Here \Cref{lem:iterate} is in place of the estimate for approximable categories used in loc.\ cit.\
    \end{itemize}
\end{remark}

\begin{remark}
    Let $X$ be a quasi-compact quasi-separated algebraic space. 
    Pick a closed subset $S\subseteq |X|$ with quasi-compact complement. 
    Consider the following subcategories
    \begin{itemize}
        \item $D^{\leq 0}(X)$ consists of objects $E\in D(X)$ such that $\mathcal{H}^j (E)=0$ for $j >0$
        \item $D^{\geq 0}(X)$ consists of objects $E\in D(X)$ such that $\mathcal{H}^j (E)=0$ for $j <0$. 
    \end{itemize}
    We call $(D^{\leq 0}(X), D^{\geq 0}(X))$ the \textbf{standard $t$-structure} on $D(X)$. 
    The standard truncations of an object $E\in D_{\operatorname{qc}}(X)$ have quasi-coherent cohomology.
    Moreover, since support is defined via cohomology sheaves, the truncations of an object supported on $S$ are supported on $S$.
    See `canonical truncations' in \cite[\href{https://stacks.math.columbia.edu/tag/0118}{Tag 0118}]{StacksProject}.
    Hence, the standard $t$-structure induces a $t$-structure on $D_{\operatorname{qc},S}(X)$, which is denoted by $(D^{\leq 0}_{\operatorname{qc},S}(X), D^{\geq 0}_{\operatorname{qc},S}(X))$.
    Write $D^?_{\operatorname{qc},S}(X)$ for $(D_{\operatorname{qc},S}(X))^?$ in \Cref{sec:prelim_t-structures_t_structures} with $?\in \{-,+,b\}$. 
\end{remark}

\begin{definition}
    \label{def:wa_spaces}
    A quasi-compact quasi-separated algebraic space $X$ satisfies \textbf{W.A.}\ if $D_{\operatorname{qc},S}(X)$ is weakly approximable for every closed subset $S\subseteq |X|$ with quasi-compact complement.
\end{definition}

\begin{lemma}
    \label{lem:perf_vanishing}
    Let $X$ be a quasi-compact quasi-separated algebraic space. 
    Choose $S\subseteq |X|$ closed with quasi-compact complement.
    If $G^\prime$ is a compact generator of $D_{\operatorname{qc},S}(X)$, then $\operatorname{Hom}(G^\prime,G^\prime[n])=0$ for $n\gg 0$.
\end{lemma}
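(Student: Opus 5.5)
The plan is to reduce the vanishing to a local statement about perfect complexes. Here $G^\prime$ is a compact object of $D_{\operatorname{qc},S}(X)$, hence a perfect complex on $X$ supported on $S$ (\Cref{sec:prelim_perfects}). Since $D_{\operatorname{qc},S}(X)\subseteq D_{\operatorname{qc}}(X)$ is full, $\operatorname{Hom}(G^\prime,G^\prime[n])$ may be computed in $D_{\operatorname{qc}}(X)$, or equivalently in $D(X)$. So it suffices to show: for perfect complexes $P,Q$ on a quasi-compact quasi-separated algebraic space $X$, $\operatorname{Hom}_{D(X)}(P,Q[n])=0$ for $n\gg 0$.

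First, I would rewrite the Hom group via the internal Hom. Because $P$ is perfect, it is dualizable, so
\[
\operatorname{Hom}(P,Q[n])\cong \operatorname{Hom}(\mathcal{O}_X,(P^\vee\otimes^{\mathbf{L}}Q)[n])=H^n(X,P^\vee\otimes^{\mathbf{L}}Q).
\]
The object $E:=P^\vee\otimes^{\mathbf{L}}Q$ is again perfect. Perfect complexes on a quasi-compact space have bounded cohomology: locally they are strictly perfect, and a finite étale cover (using quasi-compactness) bounds the amplitude uniformly. So $E\in D^{\leq a}_{\operatorname{qc}}(X)$ for some $a$.

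Second, I would invoke the finite cohomological dimension of quasi-coherent cohomology on quasi-compact quasi-separated algebraic spaces. Concretely, there is an integer $d$ such that $H^i(X,\mathcal{F})=0$ for all $\mathcal{F}\in\operatorname{Qcoh}(X)$ and $i>d$; equivalently, $\mathbf{R}\Gamma$ has bounded cohomological amplitude on $D_{\operatorname{qc}}(X)$, as in \cite[\href{https://stacks.math.columbia.edu/tag/073G}{Tag 073G}]{StacksProject} and the related tags in the chapter on derived categories of spaces. Since $\mathbf{R}\Gamma$ has finite amplitude, the hypercohomology spectral sequence (or the bounded-amplitude version for unbounded complexes) gives $H^n(X,E)=0$ for $n>a+d$, which proves the claim.

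The main obstacle is the uniform cohomological dimension bound for qcqs algebraic spaces. For schemes it follows from Čech/Mayer--Vietoris induction on the number of affines. For spaces I would either cite the Stacks Project result directly or argue via the induction principle \cite[\href{https://stacks.math.columbia.edu/tag/09IT}{Tag 09IT}]{StacksProject}, using étale Mayer--Vietoris squares (elementary distinguished squares) together with the affine case. Both approaches need care about hypercohomology of unbounded-below complexes, but $E$ is bounded, so this is harmless. An alternative that avoids internal Hom is to prove thickness in each variable exactly as in the observation inside the proof of \Cref{lem:normalize}. That reduces the claim to $\operatorname{Hom}(\mathcal{O}\text{-type generators})$, but the dualizability route is more direct.
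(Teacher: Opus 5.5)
Your proof is correct. It follows the same skeleton as the paper, but you prove the key vanishing yourself where the paper cites it.

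The paper's argument has two steps. First, $G'$ is perfect, so it lies in $D^-_{\operatorname{qc},S}(X)$. Second, it quotes \cite[Lemma 3.40]{GuisadoVillaalgordo/Lank:2026} to get $\operatorname{Hom}(G'[-t],G')=0$ for $t\gg 0$. So the paper's essential input is a black-box vanishing of $\operatorname{Hom}(P[-t],E)$ for $t\gg 0$, with $P$ compact and $E$ merely bounded above.

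You prove this vanishing directly in the case where both arguments are perfect:
\begin{enumerate}
\item dualizability rewrites $\operatorname{Hom}(P,Q[n])$ as $H^n(X,P^\vee\otimes^{\mathbf{L}}Q)$;
\item boundedness of perfect complexes places $P^\vee\otimes^{\mathbf{L}}Q$ in some $D^{[b,a]}_{\operatorname{qc}}(X)$;
\item the uniform cohomological dimension bound $d$ for quasi-coherent sheaves on qcqs spaces, applied through the truncation triangles, kills $H^n$ for $n>a+d$.
\end{enumerate}

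The paper's citation buys brevity and a more general statement, since the target need only be bounded above. A statement of that generality needs some control over unbounded-below objects, which your observation that $P^\vee\otimes^{\mathbf{L}}Q$ is bounded lets you sidestep. Your route is self-contained modulo standard Stacks Project facts and gives an explicit threshold $a+d$.

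One wording point: by a ``finite \'etale cover'' you mean a covering by finitely many \'etale opens, not a finite \'etale morphism.
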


\begin{proof}
    By \cite[\href{https://stacks.math.columbia.edu/tag/0AED}{Tag 0AED}]{StacksProject}, $G^\prime$ is perfect.
    Hence, $G^\prime \in D^-_{\operatorname{qc},S}(X)$.
    By \cite[Lemma 3.40]{GuisadoVillaalgordo/Lank:2026}, it follows that $\operatorname{Hom}(G^\prime[-t],G^\prime)=0$ for $t\gg 0$.
    Thus, $\operatorname{Hom}(G^\prime,G^\prime[n])=0$ for $n\gg 0$. 
    (See also the proof of \cite[Lemma A.8]{GuisadoVillaalgordo/Lank:2026}).
\end{proof}

\begin{proposition}
    \label{prop:A8}
    Consider an elementary distinguished square $j\colon U\to X$ and $f\colon Y\to X$ of quasi-compact quasi-separated algebraic spaces (see \cite[\href{https://stacks.math.columbia.edu/tag/08GM}{Tag 08GM}]{StacksProject}). 
    If $U$ and $Y$ satisfy W.A., then so does $X$.
\end{proposition}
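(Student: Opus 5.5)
Fix a closed subset $S\subseteq |X|$ with quasi-compact complement; we must show $D_{\operatorname{qc},S}(X)$ is weakly approximable. The plan is to realize $D_{\operatorname{qc},S}(X)$ as the middle term of a recollement of singly compactly generated categories whose outer terms are $D_{\operatorname{qc},S\cap T}(Y)$-type categories for $Y$ and $U$. Then \Cref{thm:glueing_weak_approx} applies, with the Hom-vanishing hypothesis supplied by \Cref{lem:perf_vanishing}. Write $T := |X|\setminus |U|$. This is closed, and $f$ restricts to an isomorphism $f^{-1}(T)\to T$. Since $U$ is a quasi-compact open, $T$ has quasi-compact complement.

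\emph{Step 1: the recollement.} Take $\mathcal{D} := D_{\operatorname{qc},S}(X)$, $\mathcal{D}_U := D_{\operatorname{qc},S\cap |U|}(U)$ and $\mathcal{D}_Z := D_{\operatorname{qc},S\cap T}(X)$. Let $i_\ast$ be the inclusion $\mathcal{D}_Z\subseteq \mathcal{D}$ and $j^\ast$ the restriction $\mathbf{L}j^\ast$. The kernel of $j^\ast$ on $\mathcal{D}$ is exactly $\mathcal{D}_Z$. Because $S\cap T$, $S$ and $S\cap|U|$ all have quasi-compact complements, each category is compactly generated by a single perfect complex (\Cref{sec:prelim_perfects}). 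This is a Bousfield localization by the compactly generated subcategory $\mathcal{D}_Z$, with Verdier quotient $\mathcal{D}_U$. To get the quotient, I would use the Thomason--Trobaugh/Stacks localization for the quasi-compact open $U$ with supports, so that $\mathbf{R}j_\ast$ gives the fully faithful right adjoint. Brown representability then supplies the remaining adjoints: $i^!$ exists because $i_\ast$ preserves coproducts, and $j_!$ because $j^\ast$ preserves products/coproducts between compactly generated categories. I would check that these adjoints preserve the support conditions. This yields a recollement as in \Cref{setup:recollement}.

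\emph{Step 2: the outer terms are weakly approximable.} $\mathcal{D}_U$ is weakly approximable by W.A.\ for $U$ applied to the closed subset $S\cap|U|$, whose complement in $|U|$ is quasi-compact. For $\mathcal{D}_Z$, I would use the standard consequence of an elementary distinguished square. The functor $\mathbf{R}f_\ast$ induces an equivalence $D_{\operatorname{qc},f^{-1}(S\cap T)}(Y)\simeq D_{\operatorname{qc},S\cap T}(X)$, with inverse $\mathbf{L}f^\ast$ (Stacks, Tag 08GL/08GM circle). Here $f^{-1}(S\cap T)$ is closed in $|Y|$ with quasi-compact complement: since $f$ is étale and quasi-compact and $X\setminus(S\cap T)$ is quasi-compact, so is its preimage. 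So W.A.\ for $Y$ gives weak approximability of $\mathcal{D}_Z$. Weak approximability is manifestly invariant under triangulated equivalence.

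\emph{Step 3: conclude.} By \Cref{lem:perf_vanishing}, any compact generator $G^\prime$ of $D_{\operatorname{qc},S}(X)$ satisfies $\operatorname{Hom}(G^\prime,G^\prime[n])=0$ for $n\gg 0$. Then \Cref{thm:glueing_weak_approx} gives weak approximability of $D_{\operatorname{qc},S}(X)$. Since $S$ was arbitrary, $X$ satisfies W.A.

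The main obstacle is Step 1: verifying carefully, with supports, that one genuinely has a recollement. Concretely, $j_\ast=\mathbf{R}j_\ast$ must land in objects supported on $S$, which needs care since $\mathbf{R}j_\ast$ can enlarge support to the closure of $S\cap |U|$, which is contained in $S$ because $S$ is closed. The existence of $j_!$ also needs checking. I would try first to deduce everything abstractly from the localization sequence $\mathcal{D}_Z\to\mathcal{D}\to\mathcal{D}_U$ of compactly generated categories with compact-preserving inclusion (Neeman's localization theorem), where essential surjectivity of $j^\ast$ comes from $\mathbf{R}j_\ast$ restricted to supports.
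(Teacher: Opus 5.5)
Your overall strategy is the paper's: a recollement, \Cref{lem:perf_vanishing}, and \Cref{thm:glueing_weak_approx}, with the equivalence $D_{\operatorname{qc},S\cap T}(X)\simeq D_{\operatorname{qc},f^{-1}(S\cap T)}(Y)$ handling the closed piece. Steps 2 and 3 are fine. The gap is in Step 1: the recollement you write down does not exist in general. That is the one with $\mathcal{D}_Z=D_{\operatorname{qc},S\cap T}(X)$, $i_\ast$ the inclusion and $j^\ast=\mathbf{L}j^\ast$.

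In a recollement, $j^\ast$ has a left adjoint $j_!$. So $j^\ast$ must preserve products, and hence $\ker(j^\ast)$ must be closed under products in $\mathcal{D}$. Both fail already for $X=\operatorname{Spec}\mathbb{Z}$, $U=\operatorname{Spec}\mathbb{Z}[1/p]$, $S=|X|$ (e.g.\ with $f=\mathrm{id}_X$). Each $\mathbb{Z}/p^n$ lies in $D_{\operatorname{qc},T}(X)$. However, their product in $D_{\operatorname{qc}}(X)\simeq D(\mathbb{Z})$ has $\mathcal{H}^0=\prod_n\mathbb{Z}/p^n$, which is not $p$-power torsion. Equivalently, $\mathbf{L}j^\ast(\prod_n\mathbb{Z}/p^n)=(\prod_n\mathbb{Z}/p^n)[1/p]\neq 0=\prod_n\mathbf{L}j^\ast(\mathbb{Z}/p^n)$.

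So your appeal to Brown representability for $j_!$ breaks down. $\mathbf{L}j^\ast$ preserves coproducts, which only yields its right adjoint $\mathbf{R}j_\ast$; it does not preserve products. Likewise nothing produces $i^\ast$. What you actually have is the localization sequence $D_{\operatorname{qc},S\cap T}(X)\to D_{\operatorname{qc},S}(X)\to D_{\operatorname{qc},S\cap|U|}(U)$, which is only half of a recollement.

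The repair is to reverse the roles, which is what the paper does via \cite[Lemma A.4]{GuisadoVillaalgordo/Lank:2026}:
\begin{itemize}
\item put $\mathcal{D}_Z=D_{\operatorname{qc},S\cap|U|}(U)$ with $i^\ast=\mathbf{L}j^\ast$, $i_\ast=\mathbf{R}j_\ast$, and $i^!$ the right adjoint of $\mathbf{R}j_\ast$ on supports;
\item put $\mathcal{D}_U=D_{\operatorname{qc},S\cap T}(X)$ with $j_!$ the inclusion, $j^\ast$ its right adjoint (local cohomology), and $j_\ast$ the right adjoint of that.
\end{itemize}
Now every adjoint comes from coproduct preservation. $\mathbf{R}j_\ast$ preserves coproducts because $j$ is quasi-compact and quasi-separated. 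The right adjoint of the inclusion preserves coproducts because $D_{\operatorname{qc},S\cap T}(X)$ is generated by objects compact in $D_{\operatorname{qc},S}(X)$. Moreover, $\ker(j^\ast)=D_{\operatorname{qc},S\cap T}(X)^\perp$ is the essential image of $\mathbf{R}j_\ast$. Your remark that $\mathbf{R}j_\ast$ keeps supports inside $S$ is exactly what makes $i_\ast=\mathbf{R}j_\ast$ land in $D_{\operatorname{qc},S}(X)$.

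\Cref{thm:glueing_weak_approx} only needs both outer terms to be weakly approximable, not that a particular one is geometrically open or closed. So your Step 2, with the labels swapped, and your Step 3 then complete the proof exactly as in the paper.
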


\begin{proof}
    Fix a closed subset $S\subseteq |X|$ with quasi-compact complement.
    Set $T := |X|\setminus |U|$.
    By \cite[Lemma A.4]{GuisadoVillaalgordo/Lank:2026}, there exists a recollement
    \begin{displaymath}
        \begin{tikzcd}[column sep=5.5em]
            {D_{\operatorname{qc},|U|\cap S}(U)} & {D_{\operatorname{qc},S}(X)} & {D_{\operatorname{qc},S\cap T}(X)}
            \arrow["{\mathbf{R}j_\ast}" description, from=1-1, to=1-2]
            \arrow["{\mathbf{L}j^\ast}"', bend right=35, from=1-2, to=1-1]
            \arrow["{j^!_S}", bend left=35, from=1-2, to=1-1]
            \arrow["{i^!}" description, from=1-2, to=1-3]
            \arrow["{i_\ast}"', bend right=35, from=1-3, to=1-2]
            \arrow["{i^\ast}", bend left=35, from=1-3, to=1-2]
        \end{tikzcd}
    \end{displaymath}
    where $i_\ast$ is the natural inclusion.
    We apply \Cref{thm:glueing_weak_approx} to this recollement.
    In the notation of \Cref{setup:recollement}, the roles are as follows:
    \begin{itemize}
        \item $\mathcal{D}_Z = D_{\operatorname{qc},|U|\cap S}(U)$, where $\mathbf{L}j^\ast$, $\mathbf{R}j_\ast$, $j^!_S$ (of the diagram above) are respectively identified with $i^\ast$, $i_\ast$, $i^!$
        \item $\mathcal{D} = D_{\operatorname{qc},S}(X)$
        \item $\mathcal{D}_U = D_{\operatorname{qc},S\cap T}(X)$, where $i_\ast$, $i^!$, $i^\ast$ are respectively identified with $j_!$, $j^\ast$, $j_\ast$ (in \Cref{setup:recollement}).
    \end{itemize}

    First, $|U|\setminus (|U|\cap S) = |U|\cap (|X|\setminus S)$.
    Since $|U|\cap (|X|\setminus S)$ is the intersection of two quasi-compact opens of the quasi-separated space $X$, it is quasi-compact.
    As $U$ satisfies W.A., $\mathcal{D}_Z$ is weakly approximable.

    Next, apply \cite[Lemma A.6]{GuisadoVillaalgordo/Lank:2026} to $T\cap S$.
    This gives a triangulated equivalence
    \begin{displaymath}
        \mathbf{L}f^\ast\colon D_{\operatorname{qc},T\cap S}(X)\to D_{\operatorname{qc},f^{-1}(T\cap S)}(Y).
    \end{displaymath}
    Note that $|Y|\setminus f^{-1}(T\cap S) = f^{-1}(|U|\cup (|X|\setminus S))$.
    Since $Y$ is quasi-compact and $X$ is quasi-separated, $f$ is quasi-compact \cite[\href{https://stacks.math.columbia.edu/tag/073B}{Tag 073B}]{StacksProject}.
    As $|U|\cup (|X|\setminus S)$ is a quasi-compact open, $f^{-1}(|U|\cup (|X|\setminus S))$ is quasi-compact.
    Moreover, as $Y$ satisfies W.A., $D_{\operatorname{qc},f^{-1}(T\cap S)}(Y)$ is weakly approximable.
    By \cite[Remark 3.4(i)]{Canonaco/Neeman/Stellari:2024}, weak approximability is invariant under triangulated equivalence.
    Thus, $\mathcal{D}_U$ is weakly approximable.

    Observe that $|X|\setminus (S\cap T) = (|X|\setminus S)\cup |U|$, which is quasi-compact.
    Moreover, the three triangulated categories in the recollement are singly compactly generated \cite[\href{https://stacks.math.columbia.edu/tag/0AED}{Tags 0AED}, \href{https://stacks.math.columbia.edu/tag/09M8}{09M8}, \& \href{https://stacks.math.columbia.edu/tag/0AEC}{0AEC}]{StacksProject}.
    If $G^\prime$ is a compact generator of $D_{\operatorname{qc},S}(X)$, then \Cref{lem:perf_vanishing} says that $\operatorname{Hom}(G^\prime,G^\prime[n])=0$ for $n\gg 0$.
    Consequently, \Cref{thm:glueing_weak_approx} asserts that $D_{\operatorname{qc},S}(X)$ is weakly approximable.
    Therefore, as $S$ was arbitrary, $X$ satisfies W.A.
\end{proof}

\begin{lemma}
    \label{lem:characterize_T-_c}
    Let $X$ be a quasi-compact quasi-separated algebraic space. 
    Fix a closed subset $S\subseteq|X|$ with quasi-compact complement.
    Set $\mathcal{T}:=D_{\operatorname{qc},S}(X)$.
    Then $\mathcal{T}^-_c$ consists of the objects of $\mathcal{T}$ that are pseudocoherent in $D(X)$ and supported on $S$.
    If $X$ is Noetherian, $\mathcal{T}^?_c = D^?_{\operatorname{coh},S}(X)$ for $?\in \{-,b\}$.
\end{lemma}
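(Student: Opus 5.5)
The plan is to reduce to the scheme/stack-free characterization already known, by fixing a convenient $t$-structure in the preferred equivalence class. Since $X$ is quasi-compact quasi-separated and $S$ has quasi-compact complement, $\mathcal{T}=D_{\operatorname{qc},S}(X)$ is singly compactly generated with compacts the perfect complexes supported on $S$ (\Cref{sec:prelim_perfects}). First I would show that the standard $t$-structure $(D^{\leq 0}_{\operatorname{qc},S}(X),D^{\geq 0}_{\operatorname{qc},S}(X))$ lies in the preferred equivalence class. Given a compact generator $G$, I would use that $G$ is perfect, so $G\in D^{\leq a}$ for some $a$ and $\operatorname{Hom}(G,D^{\leq -b}_{\operatorname{qc},S}(X))=0$ for $b\gg0$ (the latter by \cite[Lemma 3.40]{GuisadoVillaalgordo/Lank:2026}, as in \Cref{lem:perf_vanishing}). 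The first containment gives $\mathcal{T}_G^{\leq 0}\subseteq D^{\leq a}$. For the reverse containment $D^{\leq -c}\subseteq \mathcal{T}_G^{\leq 0}$, I would invoke the analogous statement for algebraic spaces in \cite{GuisadoVillaalgordo/Lank:2026}. If that is not directly available, I would argue via the semi-orthogonal decomposition: an object of $D^{\leq -c}$ has zero map to the coaisle $\mathcal{T}_G^{\geq 1}$, since the coaisle objects $Y$ satisfy $\operatorname{Hom}(G[n],Y)=0$ for $n\geq 0$, and these are bounded below uniformly by a standard argument. Since \Cref{cor:weak_approx_spaces} (already reduced to \Cref{prop:A8}) gives weak approximability, $\mathcal{T}^-_c$ may be computed with the standard $t$-structure by \cite[Remark 4.1.5]{Canonaco/Haesemeyer/Neeman/Stellari:2024}.

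Next comes the characterization itself. If $E\in\mathcal{T}^-_c$, then for each $n$ there is a triangle $D\to E\to F$ with $D$ perfect and $F\in D^{\leq -n}$. Hence $E$ agrees with a perfect complex in degrees $>-n+1$, which is exactly the criterion for $E$ to be $(-n+1)$-pseudocoherent (Stacks, pseudocoherence via approximation by perfect complexes, valid étale locally and hence on $D(X)$). Support on $S$ is automatic. Conversely, given $E$ pseudocoherent and supported on $S$, I need perfect complexes \emph{supported on $S$} approximating $E$. This is the main obstacle. I would first approximate $E$ by an arbitrary perfect $P\to E$ with cone in $D^{\leq -n-N}$. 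I would then compose with a map from a perfect complex supported on $S$, obtained via the approximation $\mathbf{R}\Gamma_S$ and the fact that $\mathbf{R}\Gamma_S P$ is a homotopy colimit of perfects supported on $S$ in the manner of Koszul complexes. Alternatively, I would use weak approximability of $\mathcal{T}$ directly: \Cref{lem:iterate} expresses $E$ up to $D^{\leq -n}$ by objects in $\overline{\langle G\rangle}^{[1-n-A,A]}$, and compactness of the pseudocoherent truncation (via Neeman's criterion in \cite[Theorem 3.2]{Neeman:2024}, proven étale locally) lets one replace these by compacts.

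Finally, the Noetherian case. Pseudocoherent complexes on a Noetherian algebraic space are exactly $D^-_{\operatorname{coh}}(X)$ (Stacks), so $\mathcal{T}^-_c=D^-_{\operatorname{coh},S}(X)$. Intersecting with $\mathcal{T}^b$, which for the standard $t$-structure is the bounded cohomology objects, yields $\mathcal{T}^b_c=D^b_{\operatorname{coh},S}(X)$.
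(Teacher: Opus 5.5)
Your treatment of the preferred equivalence class, the forward inclusion and the Noetherian case is fine. For the preferred class you cite \cite[Lemma A.9]{GuisadoVillaalgordo/Lank:2026}, exactly as the paper does; your fallback's ``uniform lower bound on the coaisle'' is the actual content of that lemma, not a standard formality.

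The gap is the converse inclusion. You correctly flag producing perfect approximations \emph{supported on $S$} as the crux, but neither route supplies them.

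In the first route, a presentation $\mathbf{R}\Gamma_S P\cong\operatorname{hocolim}_k Q_k$ with $Q_k$ compact gives no $k$ for which $Q_k\to \mathbf{R}\Gamma_S P$ (or $Q_k\to E$) has cone in $\mathcal{T}^{\leq -n}$. Take $X=\operatorname{Spec}k[x]$, $S=V(x)$ and $P=\mathcal{O}_X$. A Koszul presentation exists, yet no compact object of $\mathcal{T}$ maps to $\mathbf{R}\Gamma_S\mathcal{O}_X$ with cone in $\mathcal{T}^{\leq 0}$. Indeed, $\mathcal{H}^1(\mathbf{R}\Gamma_S\mathcal{O}_X)$ corresponds to $k[x,x^{-1}]/k[x]$, which is not coherent. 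So the finite type of $\mathcal{H}^{i}(E)$ for $i\geq 1-n$ must enter essentially, through a degree-by-degree construction that is then globalized. On an algebraic space there is no global Koszul model for $\mathbf{R}\Gamma_S$, and this globalization is precisely the theorem you are trying to bypass. Your preliminary global perfect $P\to E$ is already its case $S=|X|$.

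In the second route, objects of $\overline{\langle G\rangle}^{[1-n-A,A]}$ are built from arbitrary coproducts and are not compact. Truncations of pseudocoherent complexes are in general not compact either. The cited result of \cite{Neeman:2024} is a weak approximability statement (this is how the paper uses \cite[Theorem 3.2(iv)]{Neeman:2024}), not a device for replacing such approximants by compact objects.

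The missing ingredient is the Stacks project's approximation by perfect complexes with supports for algebraic spaces \cite[Tags 08HP \& 08HJ]{StacksProject}. Since the open immersion $X\setminus S\to X$ is quasi-compact, it applies to every pseudocoherent $E\in\mathcal{T}$ and every $n>0$. It yields a perfect complex $P$ supported on $S$ and a map $P\to E$ that is an isomorphism on $\mathcal{H}^i$ for $i>1-n$ and surjective on $\mathcal{H}^{1-n}$. The long exact cohomology sequence then puts the cone in $\mathcal{T}^{\leq -n}$. Also $P\in\mathcal{T}^c$ by \cite[Tag 0AED]{StacksProject}, so $E\in\mathcal{T}^-_c$. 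This is the paper's argument, and it makes both of your detours unnecessary.
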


\begin{proof}
    By \cite[Lemma A.9]{GuisadoVillaalgordo/Lank:2026}, the standard $t$-structure on $\mathcal{T}$ lies in the preferred equivalence class.
    Hence, we compute $\mathcal{T}^-_c$ and $\mathcal{T}^b_c$ with respect to it.

    Pick $E\in\mathcal{T}^-_c$ and $n>0$. 
    Choose a distinguished triangle
    \begin{displaymath}
        D\to E\to F\to D[1]
    \end{displaymath}
    with $D\in\mathcal{T}^c$ and $F\in\mathcal{T}^{\leq -n}$.
    By \cite[\href{https://stacks.math.columbia.edu/tag/0AED}{Tag 0AED}]{StacksProject}, $D$ is perfect, and hence, pseudocoherent \cite[\href{https://stacks.math.columbia.edu/tag/08G8}{Tag 08G8}]{StacksProject}.
    Since $\mathcal{H}^j(F)=0$ for $j\geq 1-n$, $F$ is $(1-n)$-pseudocoherent.
    Hence, $E$ is $(1-n)$-pseudocoherent \cite[\href{https://stacks.math.columbia.edu/tag/08FV}{Tag 08FV}]{StacksProject}.
    As $n$ was arbitrary, $E$ is pseudocoherent.

    Conversely, let $E\in\mathcal{T}$ be pseudocoherent.
    Fix $n>0$.
    The open immersion $X\setminus S\to X$ is quasi-compact \cite[\href{https://stacks.math.columbia.edu/tag/073B}{Tag 073B}]{StacksProject}.
    By \cite[\href{https://stacks.math.columbia.edu/tag/08HP}{Tags 08HP} \& \href{https://stacks.math.columbia.edu/tag/08HJ}{08HJ}]{StacksProject}, there exist a perfect complex $P$ supported on $S$ and $P\to E$ inducing isomorphisms on $\mathcal{H}^i$ for $i>1-n$ and a surjection on $\mathcal{H}^{1-n}$.
    Note that the cone of $P\to E$ lies in $\mathcal{T}^{\leq -n}$.
    By \cite[\href{https://stacks.math.columbia.edu/tag/0AED}{Tag 0AED}]{StacksProject}, $P\in\mathcal{T}^c$.
    Therefore, $E\in\mathcal{T}^-_c$, which gives the first claim.

    We show the second claim. 
    By \cite[\href{https://stacks.math.columbia.edu/tag/08IK}{Tag 08IK}]{StacksProject}, the pseudocoherent objects of $D_{\operatorname{qc}}(X)$ are exactly those of $D^-_{\operatorname{coh}}(X)$.
    Hence, $\mathcal{T}^-_c = D^-_{\operatorname{coh}}(X) \cap \mathcal{T} = D^-_{\operatorname{coh},S}(X)$, and $\mathcal{T}^b_c = \mathcal{T}^b \cap\mathcal{T}^-_c =D^b_{\operatorname{coh},S}(X)$. 
    This completes the proof.
\end{proof}

\begin{proof}
    [Proof of \Cref{cor:weak_approx_spaces}]
    By \cite[Theorem 3.2(iv)]{Neeman:2024}, every quasi-compact quasi-separated scheme satisfies W.A.\ 
    Moreover, \Cref{prop:A8} asserts that W.A.\ is preserved under elementary distinguished squares.
    Consequently, the induction principle \cite[\href{https://stacks.math.columbia.edu/tag/09IT}{Tag 09IT}]{StacksProject} yields that every quasi-compact quasi-separated algebraic space satisfies W.A.
    Taking $S=|X|$ gives the second claim.

    We prove the last claim.
    Here $X$ is Noetherian. 
    Fix a closed subset $S\subseteq|X|$ with quasi-compact complement.
    Set $\mathcal{T}:=D_{\operatorname{qc},S}(X)$.
    By \Cref{lem:characterize_T-_c}, $\mathcal{T}^-_c = D^-_{\operatorname{coh},S}(X)$ and $\mathcal{T}^b_c = D^b_{\operatorname{coh},S}(X)$. 
    The standard $t$-structure restricts to $D^-_{\operatorname{coh},S}(X)$. 
    Thus, we finish by \cite[Example 10.1.2]{Canonaco/Haesemeyer/Neeman/Stellari:2024} and the first claim.
    Indeed, by loc.\ cit., a weakly approximable category is coherent if some $t$-structure in the preferred class restricts to $\mathcal{T}^-_c$.
\end{proof}

\begin{remark}
    \label{rmk:stacks_version}
    With one additional lemma on finite flat covers, the argument of \Cref{cor:weak_approx_spaces} adapts to tame algebraic stacks.
    We do not pursue this.
\end{remark}

\begin{remark}
    \label{rmk:weak_approx_spaces}
    In 2024, Bregje Pauwels shared with the author unpublished notes containing a proof of the `if' direction of \Cref{thm:glueing_weak_approx}.
    That proof is also based on a closure argument.
    In those notes, the lift arises from \cite[Lemma 5.3]{Burke/Neeman/Pauwels:2023}. 
    The output is bounded by a larger constant than its target, and a separate step restores the bound at each extension.
    In this work, bounds are built into the lift from \Cref{lem:onecell}.
    This ensures no correction is necessary, and the constants are explicit. 
\end{remark}

\bibliographystyle{alpha}
\bibliography{mainbib}

\end{document}